\documentclass[11pt,a4paper]{article}
\usepackage[utf8]{inputenc}
\usepackage{amsmath}
\usepackage{amsfonts}
\usepackage[colorlinks,linkcolor=blue,anchorcolor=blue,citecolor=blue]{hyperref}
\usepackage{amssymb}
\usepackage{extarrows}
\usepackage{amsthm}
\usepackage{hyperref}
\usepackage{enumerate}
\PassOptionsToPackage{normalem}{ulem}
\usepackage{ulem}
\usepackage[margin=0.8in]{geometry} 
\usepackage{array}
\usepackage{cases}
\usepackage{mathrsfs}
\usepackage{longtable}
\allowdisplaybreaks[4]

\usepackage{color}
\usepackage{dsfont}
\usepackage{marginnote}
\usepackage{enumitem}
\usepackage{verbatim}
\usepackage{graphicx}
\theoremstyle{plain}
\newtheorem{theorem}{\protect Theorem}[section]
\newtheorem{prop}[theorem]{\protect Proposition}
\newtheorem{definition}[theorem]{\protect Definition}

\newtheorem{lemma}[theorem]{\protect Lemma}
\newtheorem{remark}[theorem]{\protect Remark}
\newtheorem{assumption}{\protect Assumption}

\newcommand{\Rb}{\mathbb{R}}

\title{Policy Iteration Achieves Regularized Equilibrium\\
under Time Inconsistency}

\author{Yu-Jui Huang\thanks{Department of Applied Mathematics, University of Colorado, Boulder, CO, USA. \url{yujui.huang@colorado.edu}}
\and Xiang Yu\thanks{Department of Applied Mathematics,  The Hong Kong Polytechnic University, Kowloon, Hong Kong. \url{xiang.yu@polyu.edu.hk}}
	\and Keyu Zhang\thanks{
	Department of Mathematical Sciences, Tsinghua University, Beijing, China. \url{Zhangky21@mails.tsinghua.edu.cn}}}
\date{\vspace{-0.3cm}}

\begin{document}
	\maketitle

	\begin{abstract}
	 For a general entropy-regularized time-inconsistent stochastic control problem, we propose a policy iteration algorithm (PIA) and establish its convergence to an equilibrium policy with an exponential convergence rate. The design of the PIA is based on a coupled system of non-local partial differential equations, called the {\it exploratory equilibrium Hamilton--Jacobi--Bellman (EEHJB) equation}. As opposed to the standard time-consistent case, policy improvement fails in general and the target value function (now an equilibrium value function) is not even known to exist a priori. To overcome these, 
     we prove that the value functions generated by the PIA form a Cauchy sequence in a specialized Banach space, hence admit a limit, and the rate of convergence is exponential, on the strength of the Bismut--Elworthy--Li formula of stochastic representation. The limiting value function is shown to fulfill the EEHJB equation, which induces an equilibrium policy in a Gibbs form. Such convergence in value additionally implies uniform convergence of the generated policies to the equilibrium policy, again with an exponential rate.
     As a byproduct, the PIA gives a constructive proof of the global existence and uniqueness of a classical solution to our general EEHJB equation, whose well-posedness has not been explored in the literature.

       

		\ \\ 
		\noindent\textbf{Keywords}: Time inconsistency, entropy regularization, exploratory equilibrium Hamilton--Jacobi-Bellman equations, regularized equilibrium policies, policy iteration, rate of convergence 
        \ \\
        
		\noindent\textbf{2020 MSC}: 93E35, 60H30, 35Q93
	\end{abstract}

	\section{Introduction}\label{sec:intro}
    	A policy iteration algorithm (PIA), originated from ideas of Bellman \cite{Bellman1955Functional,Bellman1957Dynamic} and Howard \cite{Howard1960Dynamic}, recursively updates a current policy in a {\it greedy} manner (i.e., by maximizing {\it immediate} reward). In practice, one performs the PIA in the hope that it will eventually lead to an optimal policy. 
        For classical stochastic control problems in diffusion models, 
        Krylov \cite{Krylov2008Controlled} and Puterman \cite{Puterman1982On} showed that the PIA recovers the optimal value for a specific control problem with a compact space-time domain by classical estimates of partial differential equations (PDEs); Jacka and Mijatovi\'c \cite{Jacka2017On} outlined a general set of assumptions ensuring the PIA's convergence to an optimal policy; 
        Kerimkulov et al.\ \cite{kerimkulov2020exponential}
        used the theory of backward stochastic differential equations to establish the PIA's convergence with an appropriate convergence rate, when only the drift coefficient of the state process is controlled; Possama\"{i} and Tangpi \cite{DylanT24} addressed the PIA's convergence in a non-Markovian setting through successive approximations by linear-quadratic (LQ) control problems.

	Due to recent developments in continuous-time reinforcement learning (RL), significant attention has been drawn to \textit{entropy-regularized} stochastic control problems: exploration in RL is modeled 
   by the use of {\it relaxed control policies} (i.e., measure-valued processes), which randomize actual control actions; the entropy of the relaxed control policies is incorporated into the objective function, 
   where a temperature parameter $\lambda > 0$ governs the tradeoff between exploitation (reward maximization) and exploration (action randomization). This framework was first introduced by 
   Wang et al.\ \cite{wang2020reinforcement_continuous_time},
   where a detailed analysis in the LQ case was carried out. It has been extended in subsequent studies to the settings of mean-field games, mean-field control, optimal stopping, and optimal regime switching; see      
    \cite{reisinger2021regularity, tang2022exploratory_hjb_sicon, jia2022policy_gradient_continuous_time, bo2025optimal, guo2022entropy, jia2023q, pham25, wei2025continuous, wyy2024, dong, DFX24, dai2024learning, HLYZ25}, just to name a few.

    With entropy regularization, it is natural to ask if the PIA can still converge desirably.  
    For mean-variance portfolio selection, Wang and Zhou \cite{wang2020continuous} showed explicitly that the PIA yields an optimal policy within two iterations, relying on the inherent LQ structure. By  sophisticated PDE estimates, Huang et al.\ \cite{huang2025convergence} established the PIA's convergence in a general (non-LQ) diffusion model, when the coefficients are bounded and only the drift is controlled. Tran et al.\ \cite{tran2025policy} generalized this to two situations: one with unbounded coefficients; the other keeps coefficients bounded but allows the diffusion to be slightly controlled (under a ``smallness'' condition). Recently, Ma et al. \cite{ma2025convergence} examined the PIA via probabilistic representations of solutions to the iterative PDEs and their derivatives, obtaining a super-exponential convergence rate. In the context of optimal stopping, Dong \cite{dong} transformed the stopping problem into a regular control problem, where the PIA's convergence was established. Under a different type of entropy regularization, Dianetti et al.\ \cite{DFX24} proposed a singular control formulation of optimal stopping and showed that the PIA converges in some examples. For optimal regime switching, Huang et al.\ \cite{HLYZ25} obtained the PIA's convergence by analyzing the system of exploratory Hamilton-Jacobi-Bellman (HJB) equations. All these studies, in a nutshell, rely on the \textit{policy improvement property} plus \textit{compactness arguments}: the value functions generated by the PIA are first shown to be monotonically improving, so that their limit is well-defined; 
    once uniform bounds for these value functions and their derivatives are established, the limiting  function inherits sufficient regularity by compactness arguments---hence must be the optimal value function (by verification).

    This line of arguments, nonetheless, fails under {\it time inconsistency}. Many realistic financial and economic models are time-inconsistent (i.e., a policy deemed optimal today may not be optimal at future dates), due to e.g., non-exponential discounting, dependence of rewards on initial time and state, or non-linear objectives in expectations (like the mean-variance objective). As there is no longer a dynamically optimal policy on the entire planning horizon, a prevailing alternative is to view the control problem as an intra-personal game among the continuum of an agent's current and future selves and seek a \textit{subgame perfect Nash equilibrium}, 
    defined as a policy that cannot be improved by any one-shot deviation made by the current self; 
    see e.g., \cite{Bjork2017, BjorkKhapkoMurgoci2021} for details.
    

    While the PIA can still be formally stated under time inconsistency, analyzing its convergence faces two fundamental challenges. First, as noted in Dai et al.\ \cite{dai2023learning}, {\it policy improvement fails} in general under time inconsistency, because an agent's goal is no longer value improvement but equilibrium achievement. Hence, the standard proof arguments mentioned above—which hinge on monotonicity of generated values—simply breaks down. Second and most critically, time inconsistency deprives the PIA of a single, well-specified target. In the time-consistent case, the optimal value $V^*$, well-defined a priori, anchors the analysis of the PIA: one can first analyze the regularity of $V^*$ and inversely identify a suitable function space that facilitates compactness arguments; or, one can directly estimate the error between the $n^{th}$ iterate $V^n$ and the target $V^*$. By contrast, the target under time inconsistency is an equilibrium value, which depends on an equilibrium policy that is unknown a priori. 
    That is, the PIA now needs to reach a target without any knowledge of the target (even its existence)---a much more formidable task than that in the time-consistent case. 
    
    To the best of our knowledge, the PIA's convergence under time inconsistency was studied only by Dai et al.\ \cite{dai2023learning}: for mean-variance portfolio selection, the PIA was shown therein to converge to a specific equilibrium policy, if the initial policy chosen is close enough to that equilibrium. This result, however, relies crucially on the inherent LQ structure and cannot be easily generalized. Also, its choice of initial policy requires a known equilibrium, which may not be available in general. 
    
		
    In this paper, we consider a general entropy-regularized stochastic control problem under time inconsistency. In particular, the model is non-LQ and covers various forms of time inconsistency all at once; see \eqref{J^pi} below. Our goal is to establish the PIA's general convergence to an equilibrium policy (unknown a priori) with a suitable convergence rate. 
    

    First, we extend the {\it equilibrium HJB} equation in \cite{Yong2012, lei_nonlocal_2023, lei_nonlocality_2024, liang2025}, stated for traditional time-inconsistent problems (without entropy regularization), to the case of entropy regularization. The derived equation, called the \textit{exploratory} equilibrium HJB (EEHJB) equation, is new to the literature. It is a coupled PDE system with non-local terms for two auxiliary value functions $(V^{\hat\pi,1},V^{\hat\pi,2})$, which jointly characterize an equilibrium policy $\hat\pi$ via a Gibbs measure; see the system in \eqref{eq:EEHJB} and Gibbs measure in \eqref{eq:gibbs_policy}-\eqref{Z}. Leveraging on this Gibbs-measure relation, we tailor-make a PIA for our time-inconsistent setting, which iterates $(V^{\hat\pi,1},V^{\hat\pi,2})$ jointly; see Section~\ref{subsect:pia}. It is worth noting that, besides the Gibbs-measure structure, our equation allows dependence on initial time and state as well as additional nonlinearity (through $G$ in \eqref{eq:EEHJB}), while the nonlocal PDE in \cite{Yong2012, lei_nonlocal_2023, lei_nonlocality_2024, liang2025} only allows dependence on initial time. 

    To show that the PIA converges in the absence of policy improvement (due to time inconsistency), we turn away from proving any monotonicy of the iterates $\{(V^{\pi_n,1},V^{\pi_n,2})\}_{n\ge 1}$ and instead focus on whether the iterates form a \textit{Cauchy sequence} in a suitable Banach space. 
    When the initial input of the PIA is sufficiently smooth (but otherwise general), we show that $\{(V^{\pi_n,1},V^{\pi_n,2})\}_{n\ge 1}$ satisfy a recursive PDE with appropriate H{\"o}lder bounds, using the Bismut--Elworthy--Li representation formula (inspired by \cite{ma2025convergence}); see Proposition \ref{prop:1}. Based on this, we estimate in detail the norm of $(V^{\pi_{n+1},1}-V^{\pi_n,1}, V^{\pi_{n+1},2}-V^{\pi_n,2})$ in a specilized Banach space, and find that the norm decreases exponentially in $n$. This shows that $\{(V^{\pi_n,1},V^{\pi_n,2})\}_{n\ge 1}$ is indeed Cauchy in the Banach space---hence admiting a limit $(V^{*,1},V^{*,2})$ in the same space---and it converges to $(V^{*,1},V^{*,2})$ exponentially. This directly implies sufficient regularity of $(V^{*,1},V^{*,2})$, which are shown to satisfy the EEHJB equation and thus yield the existence of an equilibrium policy $\pi^*$. 
    Moreover, 
    the policy iterates $\pi^n$ converge uniformly to $\pi^*$, also exponentially. See the details in Theorem~\ref{thm:convergence}, the main result of this paper. 


    Comparing our study with \cite{ma2025convergence} well illustrates the distinctive challenges posed by time inconsistency. For a classical time-consistent problem, \cite{ma2025convergence} directly estimates the error between the $n^{th}$ iterate $V^n$ and the optimal value function $V^*$, which is a priori well-defined and known to be the unique classical solution to an exploratory HJB equation (thereby inducing an optimal policy already). 
    Under time inconsistency, the standard exploratory HJB equation becomes our EEHJB system, whose existence of solutions is  unknown from the literature. Without a clearly-specified target value function, we can no longer discuss the error between an iterate and the target, and are left to work solely with the iterates themselves. Ultimately, we prove that the iterates are Cauchy, and it is at this point we finally uncover the target value function (i.e., the limit of the Cauchy sequence) and its induced equilibrium policy; see Remark~\ref{remark:comparison_Ma} for details. 

    Our study has three main contributions. First, we prove the PIA's convergence in a general time-inconsistent setting, with no need of policy improvement or a well-defined target value function. Second, as a byproduct, the PIA itself gives a {\it constructive} proof of global existence and uniqueness of a classical solution to the coupled, non-local EEHJB equation. To the best of our knowledge, this is the {first well-posedness result} for this class of equilibirum HJB equations. As mentioned before, prior studies (e.g., \cite{Yong2012, lei_nonlocal_2023, lei_nonlocality_2024, liang2025}) focus on equilibrium HJB equations (without entropy regularization) that allow only dependence on initial time, while our EEHJB equation also allows dependence on initial state and additional nonlinearity. Such generality was supposed to pose significant analytical complexity, but it was circumvented by the PIA under entropy regularization; see Remark~\ref{rem:PIA non-local}.
    
 
    The third contribution is a new way of solving time-inconsistent stochastic control problems. 
    The literature has focued on deriving a so-called {\it extended HJB} equation
    and solving it case by case through guesses of a clever ansatz, from which an equilibrium policy can be deduced; see \cite{Bjork2017,BjorkKhapkoMurgoci2021,BMZ14, EP08,EMP12,dong2014time,HS26}, among many others. By contrast, we just arbitrarily pick inital functions to initiate the PIA, and the iterations will take over the rest to produce an equilibrium policy---{\it with no need of case-by-case guessing}. This is reminiscent of the fixed-point approach for time-inconsistent stopping problems in \cite{HN18,HNZ20,HY21,HZ20,HZ22}, where recursive game-theoretic reasoning is shown to yield intra-personal equilibrium stopping rules. Our PIA performs the same recursive intra-personal game-theoretic reasoning, but for entropy-regularized stochastic control problems; see Remark~\ref{rem:fixed-point approach}. 
    
 
    Our analysis focuses on the case where only the drift coefficient of the state process is controlled, similarly to \cite{huang2025convergence, kerimkulov2020exponential}. When the diffusion coefficient is also controlled, the study of the PIA, even in the time-consistent case, requires much more restrictive model assumptions (e.g., the state process is one-dimensional, or the diffusion is controlled only ``slightly'') as well as very different proof techniques; see e.g. \cite{tran2025policy, ma2025convergence}. Hence, we would like to leave this for future research. 
   

 The rest of the paper is organized as follows. 
 Section \ref{sec:formulation} introduces the general entropy-regularized time-inconsistent stochastic control problem to be studied. In Section \ref{subsect:eehjb}, we derive the EEHJB equation and characterize an equilibrium policy in the Gibbs form. Section \ref{subsect:pia} formulates the PIA. Section \ref{sec:reg} proves a key regularity result for the PIA. Based on this, 
 the convergence analysis of the PIA is carried out in detail in Section \ref{sec:convergence}.

	\subsection{Notation throughout the paper}\label{subsect:notation}
    We write $x \cdot y$ for the usual inner product in Euclidean spaces, and use the same notation for  matrix-vector and matrix-matrix products. 
    Given $u \in \mathbb{R}^m$ and $M \in \mathbb{R}^{m \times d \times d}$, define $u \otimes M \in \mathbb{R}^{d \times d}$  by
	\[
	[u \otimes M]_{ij} := \sum_{k=1}^m u_k M_{kij}.
	\]
	  
	  Given $T>0$, for any interval $[s_1, s_2] \subseteq [0, T]$, we define the trapezoidal domain
	\begin{equation*}
		\Delta[s_1, s_2] := \left\{ (\tau, t) : 0 \leq \tau \vee s_1 \leq t \leq s_2 \right\}.
	\end{equation*}
 We use both $\partial_{\cdot}\phi$ and $\phi_{\cdot}$ to denote derivatives, whichever is more convenient.  
 For any $\alpha \in (0,1)$ and $\varphi: [s_1, s_2] \times \mathbb{R}^d \to \mathbb{R}$, we define the following parabolic norms and semi-norms (in line with e.g., \cite{ladyzenskaja_linear_nodate}):
 \begin{align*}
 	&\|\varphi\|^{(0)} = \sup_{(t,x) \in [s_1,s_2]\times \mathbb{R}^d} |\varphi(t,x)|, \quad
 	\|\varphi\|^{(2)} = \|\varphi\|^{(0)} + \|\varphi_t\|^{(0)} + \sum_{i=1}^d \|\varphi_{x_i}\|^{(0)} + \sum_{i,j=1}^d \|\varphi_{x_i x_j}\|^{(0)}, \\
 	&\langle \varphi \rangle_t^{\left(\frac{\alpha}{2}\right)} = \sup_{\substack{t, t' \in [s_1, s_2], x \in \mathbb{R}^d \\ t \neq t'}} \frac{|\varphi(t, x) - \varphi(t', x)|}{|t - t'|^{\frac{\alpha}{2}}}, \quad
 	\langle \varphi \rangle_x^{(\alpha)} = \sup_{\substack{t \in [s_1, s_2], x, x' \in \mathbb{R}^d \\ 0 < |x - x'| \leq 1}} \frac{|\varphi(t, x) - \varphi(t, x')|}{|x - x'|^\alpha}, \\
 	&\langle \varphi \rangle^{(\alpha)} = \langle \varphi \rangle_t^{\left(\frac{\alpha}{2}\right)} + \langle \varphi \rangle_x^{(\alpha)}, \quad
 	\|\varphi\|^{(\alpha)} = \|\varphi\|^{(0)} + \langle \varphi \rangle^{(\alpha)}, \\
 	&\|\varphi\|^{(2+\alpha)} = \|\varphi\|^{(2)} + \langle \varphi_t \rangle^{(\alpha)} + \sum_{i,j=1}^d \langle \varphi_{x_i x_j} \rangle^{(\alpha)} + \sum_{i=1}^d \langle \varphi_{x_i} \rangle_t^{\left(\frac{1 + \alpha}{2}\right)}.
 \end{align*}
 For a vector-valued function $\Psi = (\psi^1, \dots, \psi^m)$, its norm is defined as the sum of the norms of its components, e.g., $\|\Psi\|^{(k+\alpha)} := \sum_{j=1}^m \|\psi^j\|^{(k+\alpha)}$. We sometimes emphasize a domain $K \subseteq [0, T] \times \mathbb{R}^d$ by adding a subscript to the norm, i.e., $\|\varphi\|_K^{(0)}$ for example. In this paper, the finiteness of each parabolic norm above is understood to imply the existence and joint continuity of all derivatives involved in its definition.
 For $k \in \{0, 2\}$ and $\beta \in [0, 1)$, $C^{k+\beta}([s_1, s_2] \times \mathbb{R}^d)$ denotes the space of real- or vector-valued functions $\phi$ satisfying $\|\phi\|^{(k+\beta)}<\infty$; the spaces $C^{k+\beta}(\mathbb{R}^d)$ (resp. $C^{k+\beta}([0,T])$) are defined analogously for functions independent of the temporal (resp. spatial) variable.

 For functions $\psi(\tau, t, y, x)$ defined on $\Delta[s_1, s_2] \times \mathbb{R}^d \times \mathbb{R}^d$, we define for $\gamma \in \{0, \alpha, 2, 2+\alpha\}$ the norms
 \begin{align}
 	&[\psi]^{(\gamma)}_{[s_1, s_2]} := \sup_{\tau \in [0, s_2], y \in \mathbb{R}^d} \|\psi(\tau, \cdot, y, \cdot)\|^{(\gamma)}_{[\tau \vee s_1, s_2] \times \mathbb{R}^d}, \notag\\
 	&\|\psi\|_{[s_1, s_2]}^{(\gamma)} := \sup_{\tau \in [0, s_2], y \in \mathbb{R}^d} \left\{ \|\psi(\tau, \cdot, y, \cdot)\|_{[\tau \vee s_1, s_2] \times \mathbb{R}^d}^{(\gamma)} + \|\psi_{\tau}(\tau, \cdot, y, \cdot)\|_{[\tau \vee s_1, s_2] \times \mathbb{R}^d}^{(\gamma)} \right. \notag\\
 	&\qquad\qquad \left. + \|\psi_y(\tau, \cdot, y, \cdot)\|_{[\tau \vee s_1, s_2] \times \mathbb{R}^d}^{(\gamma)} \right\}.\label{holder norm}
 \end{align}
 These norms induce the Banach spaces\footnote{Similar spaces were considered in \cite{lei_nonlocal_2023, lei_nonlocality_2024}. For readers' convenience, we briefly verify the completeness of $\Theta_{[s_1, s_2]}^{(2)}$, which will be used in the proof of Theorem \ref{thm:convergence}. Let $\{\psi_n\}_{n \geq 1}$ be a Cauchy sequence in $\Theta_{[s_1, s_2]}^{(2)}$. For any fixed pair $(\tau, y)$, the sequence $\{\psi_n(\tau, \cdot, y, \cdot)\}$ is a Cauchy sequence in the Banach space $C^2([\tau \vee s_1, s_2] \times \mathbb{R}^d)$. By the completeness of this space, it converges to a unique limit $\psi(\tau, \cdot, y, \cdot) \in C^2([\tau \vee s_1, s_2] \times \mathbb{R}^d)$. As the Cauchy condition holds uniformly over all $(\tau, y)$ due to the supremum in the definition of the norm $[\cdot]^{(2)}_{[s_1, s_2]}$, $\{\psi_n\}$ must converge to $\psi$ under this norm. Furthermore, this uniform convergence guarantees that $\psi$ and all derivatives ($\psi_t, \psi_x, \psi_{xx}$) preserve joint continuity on $\Delta[s_1, s_2] \times \mathbb{R}^d \times \mathbb{R}^d$ and satisfy $[\psi]^{(2)}_{[s_1, s_2]} < \infty$. Thus, $\psi \in \Theta_{[s_1, s_2]}^{(2)}$, confirming the completeness.}
\begin{align*}
	\Theta_{[s_1, s_2]}^{(2)} &:= \Big\{ \psi : [\psi]_{[s_1, s_2]}^{(2)} < \infty, \text{ and } {\psi, \psi_t, \psi_x, \psi_{xx} \in C(\Delta[s_1, s_2] \times \mathbb{R}^d \times \mathbb{R}^d)} \Big\}, \\
	\Xi_{[s_1, s_2]}^{(2+\alpha)} &:= \left\{ \psi \in C(\Delta[s_1, s_2] \times \mathbb{R}^d \times \mathbb{R}^d) : \|\psi\|_{[s_1, s_2]}^{(2+\alpha)} < \infty \right\}.
\end{align*}


\section{Problem Formulation}\label{sec:formulation}
Fix $d,\ell\in\mathbb N$ and a finite time horizon $T>0$. Let $(\Omega, \mathcal{F}, \mathbb{P})$ be a complete probability space where a standard $d$-dimensional Brownian motion $W = \{W_s\}_{0 \leq s \leq T}$ is defined, and we denote by $\mathbb{F} = \{\mathcal{F}_s\}_{0 \leq s \leq T}$ the $\mathbb{P}$-augmented natural filtration generated by $W$. Consider an action space $\mathbb{A} \subseteq \mathbb R^\ell$ 
with a positive finite volume, i.e., $0 < |\mathbb{A}| < \infty$, and let $\mathscr{P}_0(\mathbb{A})$ be the set of all probability density functions $\rho$ on $\mathbb{A}$. The Shannon entropy of $\rho\in\mathscr{P}_0(\mathbb{A})$ is given by $\mathcal{H}(\rho) := -\int_{\mathbb{A}} \rho(a) \ln \rho(a) da$. For any measurable $\phi: [0, T] \times \mathbb{R}^d \times \mathbb{A} \to \mathbb R$ (or $\mathbb R^d$) and $\rho\in\mathscr{P}_0(\mathbb{A})$, we define
	\begin{equation*}
		\tilde{\phi}(t, x, \rho) := \int_{\mathbb{A}} \phi(t, x, a) \rho(a) da, \quad (t, x) \in [0, T] \times \mathbb{R}^d.
	\end{equation*}

Let $\Pi$ be the set of exploratory (relaxed) policies $\pi: [0, T] \times \mathbb{R}^d \to \mathscr{P}_0(\mathbb{A})$. For any $(t, x) \in [0, T] \times \mathbb{R}^d$ and $\pi \in \Pi$, the exploratory state dynamics $\{X_s^\pi\}_{s \in [t, T]}$ is governed by the SDE
\begin{equation}\label{sde_1}
	dX_s^\pi = \tilde{b}(s, X_s^\pi, \pi(s, X_s^\pi)) ds + \sigma(s, X_s^\pi) dW_s, \quad X_t^\pi = x \in \mathbb{R}^d,
\end{equation}
where $b: [0, T] \times \mathbb{R}^d \times \mathbb{A} \to \mathbb{R}^d$ and $\sigma: [0, T] \times \mathbb{R}^d \to \mathbb{R}^{d \times d}$ are measurable functions. 

\begin{definition}[Admissible Policy]\label{def:adm}
	A policy $\pi \in \Pi$ is said to be \textit{admissible} if SDE \eqref{sde_1} admits a unique strong solution $\{X_s^\pi\}_{s \in [t, T]}$ for all $(t, x) \in [0, T) \times \mathbb{R}^d$,  $J^\pi(t, x)$ in \eqref{J^pi} is finite for all $(t, x) \in [0, T] \times \mathbb{R}^d$, and the following continuity and estimate holds:  
    \begin{enumerate}
       \item[(i)] for each $a \in \mathbb{A}$, $(t, x, a) \mapsto \pi(t, x, a)$ is jointly continuous in $(t, x)$;
        \item[(ii)] for any $t \in [0, T)$, there exists $\tilde{t} \in (t, T]$ such that 
    \begin{equation} \label{eq:local_unif_bound}
    \mathbb{E}_{t,x} \bigg[ \int_{\mathbb{A}} \sup_{s \in [t, \tilde{t}]} \left\{ \pi(s, X_s^\pi, a) (1 + |\ln \pi(s, X_s^\pi, a)|) \right\} da \bigg] < \infty.
\end{equation}
    \end{enumerate}
    The set of all admissible policies is denoted by $\mathcal{A}$.
\end{definition}

Now, consider the objective function\footnote{The objective function \eqref{J^pi} is chosen to be 
in line with standard problem formulation under exponential discounting (e.g., \cite{wang2020reinforcement_continuous_time,jia2022policy_gradient_continuous_time,tang2022exploratory_hjb_sicon,huang2025convergence,ma2025convergence}). In fact, our analysis in this paper readily applies to an objective function of the form
	\[
	J^{\pi}(t,x) := \mathbb{E}_{t,x} \Big[ \int_{t}^{T} \big( \tilde r(t,x,s,X_s^\pi,\pi(s,X_s^\pi)) + \lambda \mathcal H(\pi(s,X_s^\pi)) \big)\, ds + F(t,x,X_T^\pi) \Big]
	+ G\big(t,x,\mathbb{E}_{t,x}[h(X_T^\pi)]\big).
	\]
}
\begin{align}
	J^{\pi}(t,x) := \mathbb{E}_{t,x} &\left[ \int_{t}^{T} \delta(s-t) \left( \tilde{r}(x, s, X^\pi_s, \pi(s, X_s^\pi)) + \lambda \mathcal{H}(\pi(s, X_s^\pi)) \right) ds + F(t, x, X^\pi_T) \right] \notag\\
	&+ G(t, x, \mathbb{E}_{t,x}[h(X^\pi_T)]),\label{J^pi}
\end{align}
where $\delta: [0, T] \to \mathbb{R}^+$ is a general discount function (i.e., decreasing with $\delta(0)=1$), $\lambda > 0$ is an exogenous temperature parameter, and $r:\mathbb{R}^d \times [0, T] \times \mathbb{R}^d \times \mathbb{A}\to \mathbb R$, 
$F:[0, T] \times \mathbb{R}^d \times \mathbb{R}^d\to \mathbb R$, $G:[0, T] \times \mathbb{R}^d \times \mathbb{R}^m\to\mathbb R$, and $h:\mathbb{R}^d \to \mathbb{R}^m$ are given measurable functions.
Due to the potential non-exponential stucture of $\delta$, the dependence of $r$, $F$, and $G$ on initial time and/or state $t,x$, and the potential nonlinearity of $G$ in its third variable, the stochastic control problem $\sup_{\pi\in\mathcal A}J^\pi(t,x)$ is time-inconsistent, i.e., an optimal policy $\pi\in\mathcal A$ found at $(t,x)$ may no longer be optimal at a future time-state pair $(t',x')$. In response, an agent considers an intra-personal game among her current and future selves, and look for an equilibrium policy as defined below. 

\begin{definition}[Equilibrium Policy]\label{def:equilibrium}
	We say $\hat{\pi} \in \mathcal{A}$ is a (regularized) equilibrium policy for \eqref{J^pi} if, 
    for any $(t, x) \in [0, T) \times \mathbb{R}^d$ and any $\pi_0 \in \mathcal{A}$ such that the perturbed policy $\hat{\pi}_{t, \epsilon, \pi_0}$ defined by
    	\begin{equation}\label{perturbed}
		\hat{\pi}_{t, \epsilon, \pi_0}(s, z) := 
		\begin{cases} 
			\pi_0(s, z), & s \in [t, t+\epsilon), \, z \in \mathbb{R}^d, \\
			\hat{\pi}(s, z), & s \in [t+\epsilon, T], \, z \in \mathbb{R}^d, 
		\end{cases}
	\end{equation}
    remains in $\mathcal A$ for sufficiently small $\epsilon > 0$, we have 
	\begin{equation}\label{equilibrium_con}
		\limsup_{\epsilon \downarrow 0} \frac{J^{\hat{\pi}_{t, \epsilon, \pi_0}}(t, x) - J^{\hat{\pi}}(t, x)}{\epsilon} \leq 0. 
	\end{equation}
\end{definition}

\subsection{Derivation of the Exploratory Equilibirum HJB (EEHJB) Equation}\label{subsect:eehjb}
{\it Equilibrium HJB} equations, introduced in \cite{Yong2012} and recently studied in \cite{lei_nonlocal_2023, lei_nonlocality_2024, liang2025}, characterize equilibrium policies under time inconsistency induced by reward functions' dependence on initial time. 
As our objective \eqref{J^pi} further involves entropy regularization as well as time inconsistency induced by reward functions' dependence on initial state and additional nonlinearlity, we will derive below a new (and much more general) equilibrium HJB equation (i.e., \eqref{eq:EEHJB} below).  
To begin with, we introduce 
\begin{equation}\label{V^pi}
   V^{\pi}(\tau, t, y, x):= V^{\pi,1}(\tau, t, y, x) + G(\tau, y, V^{\pi,2}(t, x)),  
\end{equation}
defined on $\Delta[0, T] \times \mathbb{R}^d \times \mathbb{R}^d$, where
\begin{align}
	&V^{\pi,1}(\tau, t, y, x) := \mathbb{E}_{t,x} \left[ \int_t^T \delta(s-\tau) \left( \tilde{r}(y, s, X_s^\pi, \pi(s, X_s^\pi)) + \lambda \mathcal{H}(\pi(s, X_s^\pi)) \right) ds + F(\tau, y, X_T^\pi) \right], \notag\\
	&V^{\pi,2}(t, x) := \mathbb{E}_{t,x} [h(X_T^\pi)].\label{V^1, V^2}
\end{align}
By construction, the objective function \eqref{J^pi} is recovered on the diagonal: $J^\pi(t, x) = V^\pi(t, t, x, x)$.
By \eqref{J^pi} and the law of iterated expectations, under the perturbed policy $\hat{\pi}_{t, \epsilon, \pi_0}$ in \eqref{perturbed}, 
\begin{align*}
	J^{\hat{\pi}_{t, \epsilon, \pi_0}}(t, x) &= \mathbb{E}_{t,x} \bigg[ \int_t^{t+\epsilon} \delta(s-t) \left( \tilde{r}(x, s, X_s^{\pi_0}, \pi_0(s, X_s^{\pi_0})) + \lambda \mathcal{H}(\pi_0(s, X_s^{\pi_0})) \right) ds \\
	&\quad + V^{\hat{\pi},1}(t, t+\epsilon, x, X_{t+\epsilon}^{\pi_0}) +  G\left(t, x, \mathbb{E}_{t,x}\left[V^{\hat{\pi},2}(t+\epsilon, X_{t+\epsilon}^{\pi_0})\right]\right)\bigg].
\end{align*} 
Assume heuristically that the auxiliary functions $V^{\hat{\pi},1}$ and $V^{\hat{\pi},2}$ possess sufficient regularity with respect to (w.r.t.) the \textit{flow variables} (i.e., the second and fourth arguments for $V^{\hat{\pi},1}$, and the first and second for $V^{\hat{\pi},2}$), so that we can apply Itô's formula to $V^{\hat{\pi},1}(t, \cdot, x, \cdot)$ and $ V^{\hat{\pi},2}(\cdot, \cdot)$ along the state process $X^{\pi_0}$ over the interval $[t, t+\epsilon]$. Utilizing the first-order Taylor expansion for the function $G(t,x,\cdot)$ and taking the limit as $\epsilon \downarrow 0$, we exploit the condition \eqref{equilibrium_con} to get the inequality
{\small
\begin{align}
	&\mathcal{I}(t, x, \pi_0) := V_t^{\hat{\pi},1}(t, t, x, x) + \tilde{b}(t, x, \pi_0(t, x)) \cdot  V_x^{\hat{\pi},1}(t, t, x, x) + \frac{1}{2} \text{tr}\left(\sigma \sigma^\top(t, x)  V_{xx}^{\hat{\pi},1}(t, t, x, x)\right) \nonumber \\
	&\quad + \tilde{r}(x, t, x, \pi_0(t, x)) + \lambda \mathcal{H}(\pi_0(t, x)) \nonumber \\
	&\quad + G_z(t, x, V^{\hat{\pi},2}(t, x))\cdot \left[  V_t^{\hat{\pi},2}(t, x) + \tilde{b}(t, x, \pi_0(t, x)) \cdot  V_x^{\hat{\pi},2}(t, x) + \frac{1}{2} \text{tr}\left(\sigma \sigma^\top(t, x)  V_{xx}^{\hat{\pi},2}(t, x)\right) \right] \leq 0,\label{I} 
\end{align}
}where $V_t^{\hat{\pi},1}$ denotes the partial derivative w.r.t.\ the second argument, while $V_x^{\hat{\pi},1}$ and $V_{xx}^{\hat{\pi},1}$ denote the gradient and the Hessian matrix w.r.t.\ the fourth argument, respectively. {Here, we have used Definition \ref{def:adm} (i) and (ii), along with the dominated convergence theorem, to conclude 
{\small\begin{align*}
    \lim_{\epsilon\downarrow 0}\frac{1}{\epsilon}\mathbb{E}_{t,x} \bigg[ \int_t^{t+\epsilon} \delta(s-t) \left( \tilde{r}(x, s, X_s^{\pi_0}, \pi_0(s, X_s^{\pi_0})) + \lambda \mathcal{H}(\pi_0(s, X_s^{\pi_0})) \right) ds\bigg]=\tilde{r}(x, t, x, \pi_0(t, x)) + \lambda \mathcal{H}(\pi_0(t, x)).
\end{align*}}}

Note that  $\mathcal{I}(t, x, \pi_0)$ depends on  $\pi_0$ only through its value $\pi_0(t, x) \in \mathscr{P}_0(\mathbb{A})$. Furthermore, it is evident that $\mathcal{I}(t, x, \hat{\pi}) = 0$. As a result, if the admissible policy $\hat{\pi}$ is chosen such that it pointwise maximizes $\mathcal{I}(t, x, \pi_0)$ over $\mathscr{P}_0(\mathbb{A})$, the condition $\mathcal{I}(t, x, \pi_0) \leq 0$ is satisfied for any $\pi_0$, so that the definition of an equilibrium policy (i.e., \eqref{equilibrium_con}) is fulfilled. Solving this maximization problem leads to an {equilibrium policy in the form} of a \textit{Gibbs measure}:
\begin{equation}
	\hat{\pi}(t, x)(a) = \Gamma(t, x, Z(t,x), a) := \frac{\exp\left( \frac{1}{\lambda} \left[b(t, x, a) \cdot Z(t,x) + r(x, t, x, a)\right] \right)}{\int_{\mathbb{A}} \exp\left( \frac{1}{\lambda} \left[b(t, x, a') \cdot Z(t,x) + r(x, t, x, a')\right] \right) da'}, \label{eq:gibbs_policy}
\end{equation}
where 
\begin{equation}\label{Z}
	Z(t,x) := V_x^{\hat{\pi},1}(t, t, x, x) + G_z(t, x, V^{\hat{\pi},2}(t, x)) \cdot V_x^{\hat{\pi},2}(t, x).
\end{equation}
   Under the equilibrium policy $\hat{\pi}$, the pair $(V^{\hat{\pi},1},V^{\hat{\pi},2})$ satisfies the coupled PDE system
\begin{equation}\label{eq:EEHJB}
	\begin{cases}
		 V_t^{\hat{\pi},1}(\tau, t, y, x) + \frac{1}{2} \text{tr}\left(\sigma \sigma^\top(t, x)  V_{xx}^{\hat{\pi},1}(\tau, t, y, x)\right)  + \tilde{b}(t, x, \hat{\pi}(t,x)) \cdot \left[  V_x^{\hat{\pi},1}(\tau, t, y, x) - \delta(t-\tau) Z(t, x) \right] \\
		\qquad\qquad\qquad\, + \delta(t-\tau)\left[ H(t, x, Z(t, x)) + \left( \tilde{r}(y, t, x,  \hat{\pi}(t, x)) - \tilde{r}(x, t, x,  \hat{\pi}(t, x)) \right)\right] = 0, \\
	    V_t^{\hat{\pi},2}(t, x)  + \frac{1}{2} \text{tr}\left(\sigma \sigma^\top(t, x)  V_{xx}^{\hat{\pi},2}(t, x)\right)+ \tilde{b}(t, x, \hat{\pi}(t, x)) \cdot  V_x^{\hat{\pi},2}(t, x) = 0, \\
		V^{\hat{\pi},1}(\tau, T, y, x) = F(\tau, y, x), \quad V^{\hat{\pi},2}(T, x) = h(x),
	\end{cases}
\end{equation}
where 
\begin{equation}\label{hfunction}
	H(t, x, z) := \lambda \ln \left( \int_{\mathbb{A}} \exp\left( \frac{1}{\lambda} \left[ b(t, x, a) \cdot z + r(x, t, x, a) \right] \right) da \right).
\end{equation}
We call the system \eqref{eq:EEHJB} the {\it exploratory equilibrium HJB (EEHJB)} equation. Note that it is inherently non-local, as $V_1^{\hat{\pi}}(\tau, t, y, x)$ explicitly depends on its diagonal values at $(t, t, x, x)$ through $Z(t, x)$ in \eqref{Z}. Using \eqref{hfunction}, it can be checked that
\[H_{z}(t,x,Z(t,x))=\tilde{b}(t,x,\hat{\pi}(t,x)).\]
Consequently, the EEHJB equation \eqref{eq:EEHJB} can be rewritten as
\begin{equation*}
	\begin{cases}
		V_t^{\hat{\pi},1}(\tau, t, y, x) + \frac{1}{2} \text{tr}\left(\sigma \sigma^\top(t, x) V_{xx}^{\hat{\pi},1}(\tau, t, y, x)\right) + H_{z}(t, x, Z(t, x)) \cdot \left[ V_x^{\hat{\pi},1}(\tau, t, y, x) - \delta(t-\tau) Z(t, x) \right] \\
		\qquad\qquad\qquad\, + \delta(t-\tau)\left[ H(t, x, Z(t, x)) + \int_{\mathbb{A}} \left( r(y, t, x, a) - r(x, t, x, a) \right) \Gamma(t, x, Z(t, x), a) da\right] = 0, \\
		V_t^{\hat{\pi},2}(t, x)  + \frac{1}{2} \text{tr}\left(\sigma \sigma^\top(t, x) V_{xx}^{\hat{\pi},2}(t, x)\right)+ H_{z}(t, x, Z(t, x))\cdot V_x^{\hat{\pi},2}(t, x) = 0, \\
		V^{\hat{\pi},1}(\tau, T, y, x) = F(\tau, y, x), \quad V^{\hat{\pi},2}(T, x) = h(x).
	\end{cases}
\end{equation*}

\begin{remark}[Relation to the extended HJB equation in \cite{Bjork2017, BjorkKhapkoMurgoci2021}]\label{rem:bjork}
	If the auxiliary functions $V^{\hat{\pi},1}$ and $V^{\hat{\pi},2}$ are sufficiently regular (i.e., $V^{\hat{\pi},1}$ is $C^{1,2}$ w.r.t.\ the reference variables $\tau$ and $y$), the equilibrium value function $J^{\hat{\pi}}(t,x) = V^{\hat{\pi},1}(t, t, x, x) + G(t, x, V^{\hat{\pi},2}(t, x))$ will satisfy the equation
	\begin{align} \label{eq:Bjork_structure}
		\sup_{\pi \in \mathscr{P}_0(\mathbb{A})} \bigg\{ & \mathcal{L}^\pi J^{\hat{\pi}}(t, x) + \tilde{r}(x, t, x, \pi) + \lambda \mathcal{H}(\pi) \nonumber \\
		& - \Big( \mathcal{L}^\pi V^{\hat{\pi},1}_{\text{diag}}(t, x) - \mathcal{L}^\pi [V^{\hat{\pi},1}_{t,x}](t, x) \Big) 
		- \Big( \mathcal{L}^\pi G^\diamond(t, x) - \mathcal{G}^\pi[V^{\hat{\pi},2}](t, x) \Big) \bigg\} = 0,
	\end{align}
    with the terminal condition
        $J^{\hat{\pi}}(T, x) = F(T, x, x) + G(T, x, h(x))$,
	where
	\begin{enumerate}
		\item $\mathcal{L}^\pi$, the infinitesimal generator acting on flow variables $(t, x)$, is given by 
		\[ \mathcal{L}^\pi \phi(t,x) := \partial_t \phi(t,x) + \tilde{b}(t, x, \pi) \cdot \partial_x \phi(t,x) + \frac{1}{2} \text{tr}\left(\sigma \sigma^\top(t, x) \partial_{xx}^2 \phi(t,x)\right),\quad \forall \phi \in C^{1,2}([0,T]\times\mathbb{R}^d); \]
		\item $V^{\hat{\pi},1}_{\text{diag}}(t, x) := V^{\hat{\pi},1}(t, t, x, x)$, and the term $\mathcal{L}^\pi V^{\hat{\pi},1}_{\text{diag}}(t,x)$ involves total derivatives, including derivatives w.r.t.\ the reference variables $\tau$ and $y$ evaluated at the diagonal $(\tau,y)=(t,x)$;
		\item $V^{\hat{\pi},1}_{\tau,y}(t, x) := V^{\hat{\pi},1}(\tau, t, y, x)$ represents the function with fixed reference variables $\tau$ and $y$, and  $\mathcal{L}^\pi[V^{\hat{\pi},1}_{t,x}](t, x)$ means that the generator $\mathcal{L}^\pi$ acts  on only the flow variables $(t, x)$, with the reference variables fixed at $(\tau, y) = (t, x)$;
		\item $G^\diamond(t, x) := G(t, x, V^{\hat{\pi},2}(t, x))$ and  
        $\mathcal{G}^\pi[V^{\hat{\pi},2}](t, x) := G_z(t, x, V^{\hat{\pi},2}(t, x)) \cdot \mathcal{L}^\pi V^{\hat{\pi},2}(t, x)$. 
	\end{enumerate}
	The PDE system consisting of \eqref{eq:Bjork_structure} and \eqref{eq:EEHJB}, jointly for $(J^{\hat \pi}, V^{\hat{\pi},1}, V^{\hat{\pi},2})$, corresponds precisely to the extended HJB equation in \cite[Definition 15.6]{BjorkKhapkoMurgoci2021}. As \eqref{eq:Bjork_structure} involves differentiation w.r.t.\ the reference parameters $\tau$ and $y$ (see point 1.\ above), the extened HJB equation is inherently non-local. However, the issue of non-localness has not been widely recognized or discussed in \cite{Bjork2017, BjorkKhapkoMurgoci2021} and related studies (e.g., \cite{EP08,EMP12, BMZ14, dong2014time, HS26}, among others). This is because in this line of literature, the extended HJB equation is typically solved by making a clever ansatz of $(J^{\hat \pi}, V^{\hat{\pi},1}, V^{\hat{\pi},2})$ (designed for the specific finanical/economic problem at hand), which greatly simplifies the extended HJB equation to the extent that non-localness disappears. 

    For the general time-inconsistent problem \eqref{J^pi}, as there is no way of making a clever simplifying ansatz, we have to confront the non-localness inherent in \eqref{eq:Bjork_structure} and \eqref{eq:EEHJB}. Notably, as we will find out, it suffices to focus on only \eqref{eq:EEHJB}, which yields some technical advantages (see Remark~\ref{remark:adv}); moreover, policy iteration can be used to circumvent the non-localness (see Remark~\ref{rem:PIA non-local}).  
    \end{remark}

\subsection{Policy Iteration Algorithm}\label{subsect:pia}
In the standard time-consistent case, the policy iteration algorithm (PIA) successfully converges to an optimal policy under entropy regularization; see e.g., \cite{huang2025convergence, tran2025policy, ma2025convergence}. In the present time-inconsistent setting, we aim to develop a correspdonging PIA that can lead us to an equilibrium policy.  

In view of \eqref{V^1, V^2} and \eqref{eq:gibbs_policy}-\eqref{Z}, we initiate our PIA from sufficiently smooth initial functions $(V^{0,1}, V^{0,2})$; next, for $n=0,1,2,\cdots$, we recursively follow the two-step loop:
\begin{enumerate}
	\item \textbf{Policy Update:} 
	Given the current iterates $(V^{n,1}, V^{n,2})$, the updated policy $\pi^{n+1}$ is 
	\begin{equation}\label{updated}
		\pi^{n+1}(t, x)(a) = \Gamma\left(t, x, Z^n(t, x), a\right), \quad \forall (t, x, a) \in [0, T] \times \mathbb{R}^d \times \mathbb{A},
	\end{equation}
	where $\Gamma$ is defined as in \eqref{eq:gibbs_policy}, with
	\begin{align*}
		Z^n(t, x) :=  V_x^{n,1}(t, t, x, x) + G_z(t, x, V^{n,2}(t, x))\cdot  V_x^{n,2}(t, x).
	\end{align*}
	
	\item \textbf{Policy Evaluation:} 
    Compute the next pair of auxiliary functions $(V^{n+1,1}, V^{n+1,2})$ via 
	\begin{align}
		V^{n+1,1}(\tau, t, y, x) &= \mathbb{E}_{t,x} \bigg[ \int_t^T \delta(s-\tau) \left( \tilde{r}(y, s, X^{n+1}_s, \pi^{n+1}(s, X^{n+1}_s))+ \lambda \mathcal{H}(\pi^{n+1}(s, X^{n+1}_s)) \right) ds \notag\\
		&\qquad + F(\tau, y, X^{n+1}_T) \bigg], \notag\\
		V^{n+1,2}(t, x) &= \mathbb{E}_{t,x} [h(X^{n+1}_T)],\label{V^1, V^2, n}
	\end{align}
	where $\{X^{n+1}_s\}_{s\in[t,T]}$ evolves according to \eqref{sde_1} under the updated policy $\pi^{n+1}$.
\end{enumerate}
The goal of this paper is to establish the convergence of the PIA to an equilibrium policy with a suitable convergence rate. As a preparation, we will start with a regularity result for the PIA.

\section{A Regularity Result}\label{sec:reg}
Before we can investigate the convergence of our PIA (the focus of Section~\ref{sec:convergence}), we need to first ensure that all generated policies $\{\pi^n\}_{n\ge 1}$ are admissible and appropriate regularity can be maintained across all the iterates  $\{(V^{n,1}, V^{n,2})\}_{n\ge 1}$. To this end, we impose the following. 

\begin{assumption} \label{assump:regularity}
The following regularity conditions hold:
	\begin{enumerate}
		\item[(i)] For any fixed $a \in \mathbb{A}$ and $y \in \mathbb{R}^d$, the function $\phi \in \{b(\cdot, \cdot, a), r(y, \cdot, \cdot, a), r_y(y, \cdot, \cdot, a), \sigma(\cdot, \cdot)\}$ belongs to $C^{2}([0, T] \times \mathbb{R}^d)$ and there exists $C_0>0$, independent of $(a,y)$, such that $\|\phi\|^{(2)} \leq C_0$. {For each fixed $a \in \mathbb{A}$, $r(\cdot, \cdot, \cdot, a)$, $r_t(\cdot, \cdot, \cdot, a)$, $r_x(\cdot, \cdot, \cdot, a)$, and $r_y(\cdot, \cdot, \cdot, a)$ are jointly continuous in $(y, t, x)$.}
		\item[(ii)]  $\sigma$ is uniformly non-degenerate, i.e., $\sigma \sigma^\top(t, x) \geq \frac{1}{C_0} I_d$ for all $(t, x) \in [0, T] \times \mathbb{R}^d$. 
	\item[(iii)] The derivative  $\phi \in \{G_t, G_z, G_{zz}, G_{zt}, G_{zx}, G_{zxz}, G_{zzz}\}$ of $G(t, x, z)$  is continuous and locally bounded in $z$ uniformly in $(t, x)$; specifically, $|\phi(t, x, z)| \leq C(z)$ for all $(t, x, z) \in [0, T] \times \mathbb{R}^d \times \mathbb{R}^m$, where $C(\cdot)$ is a locally bounded function (i.e., bounded on any compact subset of $\mathbb{R}^m$).
		\item[(iv)] There exists $\alpha \in (0, 1)$ such that $F \in \Xi_{[0, T]}^{(2+\alpha)}$\footnote{Here $F(\tau, y, x)$ is independent of the flow time $t$, with $(\tau, y)$ acting as the reference variables.} and $h \in C^{2+\alpha}(\mathbb{R}^d)$.
      \item[(v)] The discount function $\delta$ belongs to $C^2([0, T])$. 
	\end{enumerate}
\end{assumption}

By straightforward calculations, Assumption \ref{assump:regularity} directly implies the following estimates of $H$ in \eqref{hfunction} and $\Gamma$ in \eqref{eq:gibbs_policy} (whose proof is omitted).

\begin{lemma} \label{lemma:H_estimates}
	Under Assumption \ref{assump:regularity}, we have the following properties (where $C>0$ is a generic constant whose value may change from line to line):
	\begin{itemize}
		\item[(i)] The  derivatives of $H$, including $H_t, H_x, H_z, H_{zz}, H_{xz}, H_{tz}, H_{zzx}, H_{zzz}$, are continuous and satisfy
		\begin{align*}
			& |H_z| \leq C, \quad 0 \leq H_{zz} \leq C I_d, \quad |H_{zzz}| \leq C, \\
			& [ |H| + |H_t| + |H_x| + |H_{xz}| + |H_{tz}| + |H_{zzx}| ](t, x, z) \leq C(1 + |z|).
		\end{align*}
		\item[(ii)] The  derivatives of $\Gamma$, including $\Gamma_t, \Gamma_x, \Gamma_z, \Gamma_{zz}, \Gamma_{xz}$,  are continuous in $(t, x, z)$ and satisfy
		\begin{align*}
			 |\Gamma_z| + |\Gamma_{zz}| \leq C \Gamma,\qquad |\Gamma_t| + |\Gamma_x| + |\Gamma_{xz}|\leq C(1 + |z|) \Gamma.
		\end{align*}
	\end{itemize}
\end{lemma}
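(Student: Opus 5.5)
The plan is to view $H$ as $\lambda$ times a log-partition function, so that its $z$-derivatives are cumulants of the Gibbs density $\Gamma$, and then to differentiate under the integral sign. Set
\[
\ell(t,x,z,a):=\tfrac1\lambda\big[b(t,x,a)\cdot z+r(x,t,x,a)\big],\qquad \mathcal Z(t,x,z):=\int_{\mathbb A}e^{\ell}\,da ,
\]
so that $H=\lambda\ln\mathcal Z$ and $\Gamma=e^{\ell}/\mathcal Z$. For fixed $(t,x,a)$, Assumption~\ref{assump:regularity}(i) gives that $\ell$ is $C^1$ in $(t,x)$ and affine in $z$. The bounds on the derivatives of $\ell$ are:
\begin{itemize}
\item $|\partial_z\ell|\le C$;
\item $|\partial_t\ell|+|\partial_x\ell|\le C(1+|z|)$, where the chain rule produces $r_t$, $r_x$ and $r_y$ evaluated on the diagonal $y=x$;
\item $|\partial_{xz}\ell|+|\partial_{tz}\ell|\le C$.
\end{itemize}
All constants are uniform in $a$. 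Because $|\mathbb A|<\infty$, every integrand is dominated locally uniformly in $(t,x,z)$. Differentiation under the integral and continuity of the resulting derivatives then follow from dominated convergence, using the joint continuity in (i).

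\textbf{Derivatives of $H$.} Write $\mathbb E_\Gamma$ and $\mathrm{Cov}_\Gamma$ for expectation and covariance of $a\sim\Gamma(t,x,z,\cdot)$.
\begin{itemize}
\item First order: $H_z=\mathbb E_\Gamma[b]$, which gives $|H_z|\le C$.
\item Second order: $H_{zz}=\frac1\lambda\mathrm{Cov}_\Gamma(b)$. This is positive semidefinite and bounded by $C I_d$.
\item Third order: $H_{zzz}=\frac1{\lambda^2}\times$(third central moment of $b$ under $\Gamma$), which is bounded because $b$ is.
\item Size of $H$ itself: $|\ell|\le C(1+|z|)$ implies $\ln|\mathbb A|-C(1+|z|)\le\ln\mathcal Z\le \ln|\mathbb A|+C(1+|z|)$, so $|H|\le C(1+|z|)$.
\item Time and space: $H_t=\lambda\mathbb E_\Gamma[\partial_t\ell]$ and $H_x=\lambda\mathbb E_\Gamma[\partial_x\ell]$ inherit the bound $C(1+|z|)$.
\item Mixed derivatives: differentiating $H_z=\mathbb E_\Gamma[b]$ in $x$ gives $\mathbb E_\Gamma[b_x]+\mathrm{Cov}_\Gamma(b,\partial_x\ell)$, which is $O(1+|z|)$. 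The same holds for $H_{tz}$ and for $H_{zzx}$, the latter being a derivative of a covariance and producing terms of the form $\mathrm{Cov}(b,b_x)$ and third-order mixed cumulants involving $\partial_x\ell$.
\end{itemize}

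\textbf{Derivatives of $\Gamma$.} Write $\Gamma=\exp(\ell-\ln\mathcal Z)$ and differentiate.
\begin{itemize}
\item $\Gamma_z=\Gamma\,\frac1\lambda\big(b-\mathbb E_\Gamma b\big)$, so $|\Gamma_z|\le C\Gamma$.
\item $\Gamma_{zz}=\Gamma\big[\frac1{\lambda^2}(b-\mathbb E_\Gamma b)^{\otimes2}-\frac1\lambda H_{zz}\big]$, which is bounded by $C\Gamma$.
\item $\Gamma_x=\Gamma\big(\partial_x\ell-\mathbb E_\Gamma\partial_x\ell\big)$, and similarly for $\Gamma_t$; both are $\le C(1+|z|)\Gamma$.
\item $\Gamma_{xz}=\Gamma\big[(\partial_x\ell-\mathbb E_\Gamma\partial_x\ell)\,\tfrac1\lambda(b-\mathbb E_\Gamma b)+\tfrac1\lambda(b_x-\mathbb E_\Gamma b_x)-\tfrac1\lambda\mathrm{Cov}_\Gamma(b,\partial_x\ell)\big]$. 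Each bracket term is $O(1+|z|)$, which gives $|\Gamma_{xz}|\le C(1+|z|)\Gamma$.
\end{itemize}

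\textbf{Main obstacle.} The only delicate point is bookkeeping: every $(t,x)$-derivative must bring in at most one factor of $\partial_x\ell$ or $\partial_t\ell$, each linear in $z$, while all $z$-derivatives stay bounded because $\partial_z\ell=b/\lambda$ is bounded. This keeps the growth linear in $|z|$ rather than quadratic. The check is needed mainly for $H_{zzx}$ and $\Gamma_{xz}$. There the Gibbs weights never enter in a way that amplifies $|z|$, since central moments of bounded functions are bounded independently of $\Gamma$. Once this is verified, the growth bounds and continuity follow.
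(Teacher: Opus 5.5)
Your proposal is correct and is essentially the ``straightforward calculation'' the paper has in mind; the paper omits the proof entirely. You differentiate the log-partition function $\lambda\ln\int_{\mathbb A}e^{\ell}\,da$ under the integral sign, so that $z$-derivatives are cumulants of the bounded $b$ under $\Gamma$ and each $t$- or $x$-derivative brings in one factor $\partial_t\ell$ or $\partial_x\ell$ of size $C(1+|z|)$, which gives all the stated bounds, with continuity following from dominated convergence because $|\mathbb A|<\infty$.
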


The next result ensures that our PIA is well-defined, by showing that the generated policies $\{\pi^n\}_{n\ge 1}$ are all admissible. Moreover, it establishes appropriate regularity of the generated values $\{(V^{n,1}, V^{n,2})\}_{n \geq 1}$ across all iterations.

\begin{prop}\label{prop:1}
	Assume Assumption \ref{assump:regularity}. For any 
    \begin{equation}\label{V^0 condition}
    (V^{0,1}, V^{0,2}) \in (\Xi_{[0, T]}^{(2+\alpha)}\cap\Theta^{(2)}_{[0,T]})\times C^{2+\alpha}([0, T] \times \mathbb{R}^d)\quad\hbox{with}\quad \hbox{$V_{x\tau}^{0,1} = V_{\tau x}^{0,1}$ and $V_{xy}^{0,1} = V_{yx}^{0,1}$}, 
    \end{equation}
    the PIA (Section~\ref{subsect:pia}) fulfills the following: 
    \begin{itemize}[leftmargin=0.4in]
    \item [(i)] For each $n \geq 0$, we have $\pi^{n+1}\in\mathcal A$ and 
    \begin{align}
    &(V^{n+1,1}, V^{n+1,2})\in (\Xi_{[0, T]}^{(2+\alpha)}\cap\Theta^{(2)}_{[0,T]}) \times C^{2+\alpha}([0, T] \times \mathbb{R}^d),\label{V^n condition}\\
    &\hspace{0.5in}\hbox{$V_{x\tau}^{n+1,1} = V_{\tau x}^{n+1,1}$,  $V_{xy}^{n+1,1} = V_{yx}^{n+1,1}$}.\label{V^n condition'} 
    \end{align}
   Moreover, $(V^{n+1,1}, V^{n+1,2})$ is the unique solution in {$(\Xi_{[0, T]}^{(2+\alpha)}\cap\Theta^{(2)}_{[0,T]}) \times C^{2+\alpha}([0, T] \times \mathbb{R}^d)$} to 
	\begin{equation} \label{eq:recursive}
		\begin{cases}
			 V_t^{n+1,1}(\tau, t, y, x) + \frac{1}{2} \text{tr}\left(\sigma \sigma^\top(t, x)  V_{xx}^{n+1,1}(\tau, t, y, x)\right)  + H^n_{z}(t,x) \cdot \left[  V_x^{n+1,1}(\tau, t, y, x) - \delta(t-\tau) Z^n(t,x) \right]  \\
		\qquad\qquad\qquad\quad + \delta(t-\tau)\left[ H^n(t, x) + \int_{\mathbb{A}} \left(r(y, t, x, a) - r(x, t, x, a) \right)\Gamma^n(t,x,a)da \right] = 0, \\
			 V_t^{n+1,2}(t, x) + \frac{1}{2} \text{tr}\left(\sigma \sigma^\top(t, x)  V_{xx}^{n+1,2}(t, x)\right) + H^n_{z}(t,x) \cdot  V_{x}^{n+1,2}(t, x) = 0, \\
			V^{n+1,1}(\tau, T, y, x) = F(\tau, y, x), \quad V^{n+1,2}(T, x) = h(x),
		\end{cases}
	\end{equation}  
    which is a (recursive) linear PDE system that involves
	\begin{align*}
		&Z^n(t,x) :=  V_{x}^{n,1}(t, t, x, x) + G_z(t, x, V^{n,2}(t, x))\cdot  V_x^{n,2}(t, x), \\
		&H^n(t,x) := H(t, x, Z^n(t,x)), \quad
		H^n_z(t,x) := H_z(t, x, Z^n(t,x)), \\
		&\Gamma^n(t,x,a) := \Gamma(t, x, Z^n(t,x), a).
	\end{align*}
    \item [(iii)] The sequence $\{(V^{n,1}, V^{n,2})\}_{n \geq 1}$ is uniformly bounded in the sense that
	\begin{equation}\label{uniformbounded}
		\sup_{n \geq 1} \left( [V^{n,1}]_{[0,T]}^{(2)} + [V_{xy}^{n,1}]_{[0,T]}^{(0)} + \|V^{n,2}\|^{(2)} \right) < \infty.
	\end{equation}
    \end{itemize}
\end{prop}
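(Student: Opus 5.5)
Part (i) goes by induction on $n$, with part (iii) obtained afterwards from estimates that do not depend on $n$.

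\textbf{Induction step.} Suppose $(V^{n,1},V^{n,2})$ satisfies \eqref{V^n condition}--\eqref{V^n condition'}; for $n=0$ this is \eqref{V^0 condition}.

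1. \emph{Regularity of $Z^n$.} Since $V^{n,1}\in\Xi^{(2+\alpha)}_{[0,T]}$, the map $(t,x)\mapsto V^{n,1}_x(t,t,x,x)$ is differentiable in $t$ and $x$. The chain rule brings in $V^{n,1}_{x\tau}$ and $V^{n,1}_{xy}$, which are bounded because $\|V^{n,1}\|^{(2+\alpha)}$ controls $V_\tau$ and $V_y$ in the $C^{2+\alpha}$ norm in the flow variables. The symmetry \eqref{V^n condition'} is needed here to identify these mixed derivatives. Combined with Assumption \ref{assump:regularity}(iii) and $V^{n,2}\in C^{2+\alpha}$, this gives $Z^n$ bounded, Lipschitz in $x$, and $\alpha/2$-Hölder in $t$.

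2. \emph{Coefficients of \eqref{eq:recursive}.} By Lemma \ref{lemma:H_estimates}, the drift $H^n_z$ and the source terms $\delta(t-\tau)[H^n-H^n_z\cdot Z^n+\int(r(y,\cdot)-r(x,\cdot))\Gamma^n]$ are Hölder continuous of order $\alpha$, with parabolic scaling, and bounded. Each policy $\pi^{n+1}=\Gamma^n$ is bounded above and below by positive constants, because $Z^n$ is bounded and $|\mathbb A|<\infty$. Hence the entropy terms are bounded, and the admissibility conditions (i)--(ii) of Definition \ref{def:adm} hold. Lipschitz drift together with nondegenerate $C^2$ diffusion gives a unique strong solution.

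3. \emph{Solving the linear system.} Apply classical Schauder theory for linear parabolic Cauchy problems on $[0,T]\times\mathbb R^d$ (as in \cite{ladyzenskaja_linear_nodate}) with $(\tau,y)$ frozen. This yields $C^{2+\alpha}$ solutions to \eqref{eq:recursive}, and the Feynman--Kac formula identifies them with \eqref{V^1, V^2, n}. Uniqueness in the stated class holds by the maximum principle, or equivalently by Feynman--Kac.

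4. \emph{Dependence on $(\tau,y)$.} Differentiate \eqref{eq:recursive} formally in $\tau$ and in $y$. The resulting functions $V_\tau$ and $V_y$ solve the same linear equation with sources $\delta'(t-\tau)[\cdots]$ and $\delta(t-\tau)\int r_y\Gamma^n$ respectively; these sources are Hölder by Assumption \ref{assump:regularity}(i),(v). Their terminal data are $F_\tau$ and $F_y$, which lie in $C^{2+\alpha}$ by (iv). Justify the differentiation through difference quotients and Schauder estimates. This gives $V^{n+1,1}\in\Xi^{(2+\alpha)}_{[0,T]}$. The symmetries $V_{x\tau}=V_{\tau x}$ and $V_{xy}=V_{yx}$ follow because the derivatives of the solution coincide with the solutions of the differentiated equations. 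Joint continuity in $(\tau,t,y,x)$, needed for $\Theta^{(2)}$, comes from continuous dependence of Schauder solutions on parameters.

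\textbf{Uniform bound (iii).} This is the main obstacle. Schauder constants depend on $\|Z^n\|^{(\alpha)}$, which depends on the previous iterate, so a naive estimate can blow up in $n$. I would instead use the Bismut--Elworthy--Li representation, in the spirit of \cite{ma2025convergence}. The key bounds are $|H_z|\le C$ and $|\Gamma_z|\le C\Gamma$, which are uniform in $z$.

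\emph{Bound on $V^{n+1,2}$.} The drift $H^n_z$ is bounded uniformly in $n$. Hence Bismut--Elworthy--Li gives $\|V^{n+1,2}_x\|^{(0)}\le C$ independent of $n$ (using $h\in C^{2+\alpha}$). Bounds on the second derivatives need the drift's $x$-derivative, so I would use gradient estimates for parabolic equations with merely bounded drift, such as $W^{2,1}_p$ estimates, which depend only on $\|H^n_z\|^{(0)}$. Alternatively, one can obtain a recursive bound of the form $a_{n+1}\le C+\theta a_n$ with $\theta<1$ on a short interval $[T-\epsilon,T]$ and then iterate backward in time.

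\emph{Bound on $V^{n+1,1}$.} The source contains $H^n$, which grows like $C(1+|Z^n|)$. After the $-H_z\cdot Z$ cancellation, the combination $H-H_z\cdot Z$ equals $\lambda\mathcal H+\tilde r$, which is bounded by the entropy bound on a finite-volume set. So the source is bounded uniformly, and Bismut--Elworthy--Li gives first-derivative bounds. Second derivatives are handled by a representation with a $(s-t)^{-1/2}$ singularity, which is integrable, closed via a Gronwall-type argument in the backward time variable. The same argument applied to $V_y$ gives the bound on $[V^{n,1}_{xy}]^{(0)}$, uniformly in $n$.
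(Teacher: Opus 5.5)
Your part (i) is essentially the paper's argument: H\"older regularity of $Z^n$, Schauder theory with $(\tau,y)$ frozen, difference quotients for the $\tau$- and $y$-derivatives, and Feynman--Kac identification. One small slip: $t\mapsto V^{n,1}_x(t,t,x,x)$ need not be differentiable, because the $(2+\alpha)$-norm contains no $V_{xt}$. The $\frac{1+\alpha}{2}$-H\"older bound in the flow slot plus the bound on $V_{x\tau}$ still give $\|Z^n\|^{(\alpha)}<\infty$.

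\textbf{The gap in (iii).} The genuine gap is at the uniform bound on $V^{n+1,1}_x$. You assert that the running reward $\lambda\mathcal H(\Gamma^n)+\int_{\mathbb A} r(y,t,x,a)\Gamma^n\,da$ is bounded uniformly in $n$ by ``the entropy bound on a finite-volume set''. But $|\mathbb A|<\infty$ only gives the upper bound $\mathcal H(\rho)\le\ln|\mathbb A|$. The entropy of a Gibbs density is unbounded below as the density concentrates. For example, take $\mathbb A=[0,1]$, $b(t,x,a)=a$, $r\equiv0$, $\lambda=1$. Then $\mathcal H(\Gamma(t,x,z,\cdot))=1-\ln z+o(1)\to-\infty$ as $z\to\infty$. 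In general you only have $|H-H_z\cdot z|\le C(1+|z|)$, so the source is bounded by $C(1+|Z^n(s,x)|)\le C(1+|V^{n,1}_x(s,s,x,x)|)$. It therefore depends on the previous iterate, which is exactly the coupling you had identified as the obstacle. The two-sided bound on $\pi^{n+1}$ from your step 2 holds for each fixed $n$, but its constants depend on $\|Z^n\|^{(0)}$. Since $\sup_n\|Z^n\|^{(0)}<\infty$ is the heart of (iii), and both the sup bound on $V^{n,1}$ and all second-derivative bounds rely on it, the argument as written is circular.

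\textbf{The repair.} Use the contraction in $n$ that you mention only as an alternative for $V^{n+1,2}$; it is needed here. Insert $|f^{n+1,1}|\le C(1+|V^{n+1,1}_x(\tau,s,y,x)|+|V^{n,1}_x(s,s,x,x)|)$ into the Bismut--Elworthy--Li formula on $[T-\epsilon,T]$, with $\epsilon$ chosen so that $Ce^{C\epsilon}\sqrt\epsilon\le\frac13$. This gives $[V^{n+1,1}_x]^{(0)}_{[T-\epsilon,T]}\le\frac12[V^{n,1}_x]^{(0)}_{[T-\epsilon,T]}+C$. Iterate this in $n$, then propagate over a partition of $[0,T]$ by backward induction.

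The same structure is needed for the second derivatives, and it must be done jointly for $(V^{n+1,1}_{xx},V^{n+1,2}_{xx})$. The reason is that $Z^n_x$ contains $V^{n,1}_{xx}(s,s,x,x)$, $V^{n,1}_{xy}(s,s,x,x)$ and $V^{n,2}_{xx}$. So the $V_{xy}$ bound, whose source $\delta\int_{\mathbb A} r_y\Gamma^n\,da$ really is uniformly bounded, should come first. A backward-time Gronwall argument must also be organized across iterations, for instance by a short-interval contraction or a single comparison function dominating all iterates; otherwise the constants compound in $n$.

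Your $W^{2,1}_p$ alternative does not work here. With merely bounded drift it controls $V_{xx}$ only in $L^p$, hence $V_x$ in a H\"older space, not the sup norm of $V_{xx}$ that (iii) requires. Once the spatial bounds are in place, the bounds on $V^{n,1}_t$ and $V^{n,2}_t$ follow from the PDE; you should state this step explicitly.
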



\subsection{Proof of Proposition~\ref{prop:1} (i)}
{\bf Step 1: The case $n=0$.} As the action space $\mathbb A$ is bounded with $0 < |\mathbb{A}| < \infty$ and the functions $b, r$ are bounded, the policy $\pi^1(t, x, a)=\Gamma(t, x, Z^0(t, x), a)$ in \eqref{updated} is well-defined. With $(V^{0,1}, V^{0,2}) \in \Theta_{[0,T]}^{(2)} \times C^{2+\alpha}([0, T] \times \mathbb{R}^d)$, $Z^0(t, x)$ is continuous in $(t, x)$, so that $\pi^1$ is jointly continuous in $(t, x)$ for each $a \in \mathbb{A}$. Hence, to show $\pi^1 \in \mathcal{A}$, it suffices to prove that \eqref{sde_1} has a unique strong solution under $\pi^1$, $J^{\pi}(t,x)$ is finite, and \eqref{eq:local_unif_bound} holds; recall Definition~\ref{def:adm}. 

By the chain rule, 
	$\partial_x \Gamma^n(t, x, a) = \Gamma_x(t, x, Z^n(t,x), a) + \Gamma_z(t, x, Z^n(t,x), a)\cdot Z_x^n(t, x)$.
This, along with the estimates for $\Gamma_x$ and $\Gamma_z$ in Lemma \ref{lemma:H_estimates}, implies
\begin{equation} \label{eq:Gamma_x_est}
	|\partial_x \Gamma^n(t, x, a)| \leq C \left( 1 + |Z^n(t,x)| + |Z_x^n(t,x)| \right) \Gamma^n(t, x, a).
\end{equation}
For any $x, x' \in \mathbb{R}^d$, it follows from Assumption \ref{assump:regularity} that
\begin{align*}
	|\tilde{b}(t, x, \pi^1(t,x)) &- \tilde{b}(t, x', \pi^1(t,x'))| 
	\leq \int_{\mathbb{A}} \left| b(t, x, a)\Gamma^0(t, x, a) - b(t, x', a)\Gamma^0(t, x', a) \right| da \\
	&\leq \int_{\mathbb{A}} |b(t, x, a) - b(t, x', a)| \Gamma^0(t, x, a) da 
    + \int_{\mathbb{A}} |b(t, x', a)| \left| \Gamma^0(t, x, a) - \Gamma^0(t, x', a) \right| da \\
	&\leq C|x-x'| + C \int_{\mathbb{A}} \left| \int_{0}^{1} \partial_x \Gamma^0(t, x' + s(x-x'), a) \cdot (x-x') ds \right| da.
\end{align*}
Plugging the estimate \eqref{eq:Gamma_x_est} with $n=0$ into the above integral yields
\begin{align}\label{b Lipschitz}
	|\tilde{b}(t, x, \pi^1(t,x)) - \tilde{b}(t, x', \pi^1(t,x'))| 
	&\leq C \left( 1 + \|Z^0\|^{(0)} + \|Z_x^0\|^{(0)} \right) |x-x'|.
\end{align}
Observe that
\begin{align*}
 Z_x^0(t, x) &= \left[  V_{xx}^{0,1}(t, t, x, x) + V_{xy}^{0,1}(t, t, x, x) \right] + G_z(t, x, V^{0,2}(t,x))\otimes V_{xx}^{0,2}(t,x)\\& + \left[ G_{zx}(t, x, V^{0,2}(t,x)) + G_{zz}(t, x, V^{0,2}(t,x))\cdot V_x^{0,2}(t,x) \right]\cdot V_x^{0,2}(t,x).
\end{align*}
Hence, by $(V^{0,1}, V^{0,2}) \in  \Xi_{[0, T]}^{(2+\alpha)} \times C^{2+\alpha}([0, T] \times \mathbb{R}^d)$, {$V_{xy}^{0,1} = V_{yx}^{0,1}$}, and Assumption \ref{assump:regularity} (iii), we get $\|Z^0\|^{(0)} + \|Z_x^0\|^{(0)} < \infty$. In view of \eqref{b Lipschitz}, $\tilde{b}(t, x, \pi^1(t,x))$ is Lipschitz continuous in $x$ uniformly in $t$. This, along with the regularity of $\sigma$, gives a unique strong solution to \eqref{sde_1} under $\pi^1$. Now, note that
\begin{align}\label{admiss_0}
	\mathbb{E}_{t,x} &\bigg[ \int_t^T \delta(s-\tau) \left( \tilde{r}(y, s, X^{1}_s, \pi^{1}(s, X^{1}_s)) + \lambda \mathcal{H}(\pi^{1}(s, X^{1}_s)) \right) ds\bigg] \\
	&= \mathbb{E}_{t,x}\bigg[ \int_t^T \delta(s-\tau) \bigg( \int_{\mathbb{A}} \left(r(y, s, X^{1}_s, a) - r(X^{1}_s, s, X^{1}_s, a)\right)\Gamma^{0}(s, X^{1}_s, a)da\nonumber \\
	&\quad + H^0(s, X^{1}_s) - H^0_{z}(s, X^{1}_s) \cdot Z^0(s, X^{1}_s) \bigg) ds\bigg].\nonumber
\end{align}
This, along with $\|Z^0\|^{(0)} < \infty$, Assumption \ref{assump:regularity}, and Lemma \ref{lemma:H_estimates}, implies that $J^{\pi}(t,x)$ is finite. 
Similarly, 
{\small
\begin{align} \label{admiss_1}
     &\mathbb{E}_{t,x} \left[ \int_{\mathbb{A}} \sup_{s \in [t, T]} \left\{ \Gamma^0(s, X_s^1, a) (1 + |\ln \Gamma^0(s, X_s^1, a)|) \right\} da \right] \nonumber\\&=  \mathbb{E}_{t,x} \left[ \int_{\mathbb{A}} \sup_{s \in [t, T]} \left\{ \Gamma^0(s, X^1_s, a) \left( 1 + \left| \frac{1}{\lambda} \left( b(s, X^1_s, a) \cdot Z^0(s, X^1_s) + r(X^1_s, s, X^1_s, a) - H^0(s, X^1_s) \right) \right| \right) \right\} da \right]
\end{align}}is finite, thanks again to $\|Z^0\|^{(0)} < \infty$, Assumption \ref{assump:regularity}, and Lemma \ref{lemma:H_estimates}. We then conclude $\pi^1 \in \mathcal{A}$.

To show $(V^{1,1}, V^{1,2}) \in (\Xi_{[0, T]}^{(2+\alpha)}\cap\Theta^{(2)}_{[0,T]}) \times C^{2+\alpha}([0, T] \times \mathbb{R}^d)$, consider the recursive linear PDE \eqref{eq:recursive} with $n=0$ and the following auxiliary linear systems 
\begin{align} 
	&\begin{cases} \label{eq:recursive_tau}
		w_t^{1,1}(\tau, t, y, x) + \frac{1}{2} \text{tr}\left(\sigma \sigma^\top(t, x) w_{xx}^{1,1}(\tau, t, y, x)\right) + H_z^0(t, x) \cdot \left[ w_x^{1,1}(\tau, t, y, x) + \delta'(t-\tau) Z^0(t,x) \right]\\
		\quad\qquad\qquad\quad  - \delta'(t-\tau) \left[ H^0(t, x) + \int_{\mathbb{A}} \left(r(y, t, x, a) - r(x, t, x, a) \right) \Gamma^0(t,x,a) da \right] = 0, \\
		w^{1,1}(\tau, T, y, x) = F_\tau(\tau, y, x),
	\end{cases} \\
	&\begin{cases} \label{eq:recursive_y}
		v_t^{1,1}(\tau, t, y, x) + \frac{1}{2} \text{tr}\left(\sigma \sigma^\top(t, x) v_{xx}^{1,1}(\tau, t, y, x)\right)  + H_z^0(t, x) \cdot v_x^{1,1}(\tau, t, y, x) \\\qquad\qquad\quad\ \   + \delta(t-\tau) \int_{\mathbb{A}} r_y(y, t, x, a) \Gamma^0(t,x,a) da = 0, \\
		v^{1,1}(\tau, T, y, x) = F_y(\tau, y, x),
	\end{cases}
\end{align}
where $w^{1,1}$ and $v^{1,1}$ are candidates for the $\tau$-derivative and $y$-gradient, respectively, of $V^{1,1}$. 
To apply the standard parabolic theory (e.g., \cite{ladyzenskaja_linear_nodate}) to these PDEs, we claim that their coefficients satisfy
 \begin{align}\label{holder_estimate}
 	\|a_{ij}\|^{(\alpha)} < \infty, \quad \|\bar{b}_j\|^{(\alpha)} < \infty, \quad \|\bar{f}\|^{(\alpha)}_{[0, T]} < \infty, 
 \end{align}
 with $a(t,x) := \sigma \sigma^\top(t,x)$, $\bar{b}(t,x) := H_z^0(t,x)$, and
 \begin{align*}
 	&\bar{f}(\tau, t, y, x) := \delta(t-\tau) \left[ \int_{\mathbb{A}} \left(r(y, t, x, a) - r(x, t, x, a) \right) \Gamma^0(t,x,a) da + H^0(t, x)  - H_z^0(t,x) \cdot Z^0(t,x) \right].
 \end{align*}
Let us focus on proving $\|\bar{f}\|^{(\alpha)}_{[0, T]}<\infty$ in \eqref{holder_estimate}. 
As a preparation, we will first establish the H{\"o}lder continuity of $Z^0$. For any fixed $t \in [0, T]$ and $x, x' \in \mathbb{R}^d$, 
\begin{align*}\label{Z_xholder}
	|&Z^0(t,x)-Z^0(t,x')| \leq |V_x^{0,1}(t, t, x, x)- V_x^{0,1}(t, t, x', x')| \nonumber \\
	&\hspace{1.5in}\quad + |G_z(t, x, V^{0,2}(t, x))\cdot V_x^{0,2}(t, x) - G_z(t, x', V^{0,2}(t, x'))\cdot V_x^{0,2}(t, x')| \nonumber \\
	&\leq | V_x^{0,1}(t, t, x, x) - V_x^{0,1}(t, t, x, x') | + | V_x^{0,1}(t, t, x, x') - V_x^{0,1}(t, t, x', x') | \nonumber \\
	&\quad + | G_z(t, x, V^{0,2}(t, x)) - G_z(t, x', V^{0,2}(t, x')) |  |V_x^{0,2}(t, x)| \nonumber \\
	&\quad + | G_z(t, x', V^{0,2}(t, x')) |  | V_x^{0,2}(t, x) - V_x^{0,2}(t, x') | \nonumber \\
	&\leq \bigg( [V^{0,1}_{xx}]^{(0)}_{[0,T]} + [V^{0,1}_{xy}]^{(0)}_{[0,T]} + \|V_x^{0,2}\|^{(0)} (\|G^0_{zx}\|^{(0)} + \|G^0_{zz}\|^{(0)} \|V_x^{0,2}\|^{(0)}) \nonumber 
    + \|G^0_z\|^{(0)} \|V_{xx}^{0,2}\|^{(0)} \bigg) |x-x'|,
\end{align*}
where for each $\phi \in \{G_z, G_{zx}, G_{zz}\}$, we write $\phi^0(t,x) := \phi(t, x, V^{0,2}(t, x))$. Under Assumption \ref{assump:regularity}, these derivatives are continuous and locally bounded in $z$ uniformly in $(t,x)$. As $V^{0,2}$ is bounded, this implies $\|\phi^0\|^{(0)}<\infty$. On the other hand, for any $t,s \in [0, T]$ with $t < s$ and $x \in \mathbb{R}^d$, 
\begin{align*}
	|Z^0(t,x)-Z^0(s,x)| &\leq |V_x^{0,1}(t, t, x, x)- V_x^{0,1}(s, s, x, x)| \nonumber \\
	&\quad + |G_z(t, x, V^{0,2}(t, x))\cdot V_x^{0,2}(t, x) - G_z(s, x, V^{0,2}(s, x))\cdot V_x^{0,2}(s, x)| \nonumber \\
	&\leq | V_x^{0,1}(t, t, x, x) - V_x^{0,1}(t, s, x, x) | + | V_x^{0,1}(t, s, x, x) - V_x^{0,1}(s, s, x, x) | \nonumber \\
	&\quad + | G_z(t, x, V^{0,2}(t, x)) - G_z(s, x, V^{0,2}(s, x)) |  |V_x^{0,2}(t, x)| \nonumber \\
	&\quad + | G_z(s, x, V^{0,2}(s, x)) |  | V_x^{0,2}(t, x) - V_x^{0,2}(s, x) | \nonumber \\
	&\leq \bigg(\langle V^{0,1}_{x}(t, \cdot, x, \cdot) \rangle_t^{(\frac{1+\alpha}{2})} + \|G^0_z\|^{(0)} \langle V^{0,2}_{x} \rangle_t^{(\frac{1+\alpha}{2})} \bigg) |s-t|^{\frac{1+\alpha}{2}} \nonumber \\
	&\quad + \bigg([V^{0,1}_{x\tau}]^{(0)}_{[0,T]} + \|V_x^{0,2}\|^{(0)} (\|G^0_{zt}\|^{(0)} + \|G^0_{zz}\|^{(0)} \|V_t^{0,2}\|^{(0)})\bigg) |s-t|.
\end{align*}
We then conclude from the previous two inequalities that 
\begin{equation}\label{Z_holder}
	\|Z^0\|^{(\alpha)} \leq C,
\end{equation}
 where $C > 0$ depends on $T$, the regularity bounds on coefficents in Assumption \ref{assump:regularity}, and the norms $\|V^{0,1}\|^{(2+\alpha)}_{[0,T]}$ and $\|V^{0,2}\|^{(2+\alpha)}$. In the following, $C>0$ may change from line to line, but has the same dependence as described here. By Assumption \ref{assump:regularity}, the hypothesis \eqref{V^0 condition}, and Lemma \ref{lemma:H_estimates}, we readily have $[\bar{f}]^{(0)}_{[0,T]} \leq C$. For any fixed $(\tau,y,x) \in [0, T]\times\Rb^d\times\Rb^d$ and $\tau \leq t < s \leq T$, observe that
 {\small\begin{align}
 	&|\bar{f}(\tau, t, y, x) - \bar{f}(\tau, s, y, x)| \nonumber \\
 	&\leq \underbrace{\left| \delta(t-\tau) - \delta(s-\tau) \right|  \left| \int_{\mathbb{A}} (r(y, t, x, a) - r(x, t, x, a)) \Gamma^0(t,x,a) da + H^0(t, x) - H_z^0(t,x) \cdot Z^0(t,x) \right|}_{\text{I}} \nonumber \\
 	&\quad + \underbrace{\delta(s-\tau) \left| H^0(t, x) - H^0(s, x) \right|}_{\text{II}} + \underbrace{\delta(s-\tau) \left| H_z^0(t, x) \cdot Z^0(t, x) - H_z^0(s, x) \cdot Z^0(s, x) \right|}_{\text{III}} \nonumber \\
 	&\quad + \underbrace{\delta(s-\tau) \left| \int_{\mathbb{A}} (r(y, t, x, a) - r(x, t, x, a)) \Gamma^0(t,x,a) da - \int_{\mathbb{A}} (r(y, s, x, a) - r(x, s, x, a)) \Gamma^0(s,x,a) da \right|}_{\text{IV}}. \label{eq:h_decomp_full}
 \end{align}}For term I, it follows from Assumption \ref{assump:regularity} (i) and (v) and Lemma \ref{lemma:H_estimates} (i) that
\begin{equation}
	\text{I} \leq C\Big( 1 + \|Z^0\|^{(0)} \Big) |t-s|. \label{eq:I}
\end{equation}
For term II, the linear growth of $H_t$ and the boundedness of $H_z$ yield
\begin{align}
	|H^0(t, x) - H^0(s, x)| 
	&\leq |H(t, x, Z^0(t,x)) - H(s, x, Z^0(t,x))| + |H(s, x, Z^0(t,x)) - H(s, x, Z^0(s,x))| \nonumber \\
	&\leq C\left(1 +\|Z^0\|^{(0)}\right) |t-s| + C |Z^0(t,x) - Z^0(s,x)|. \label{eq:H_inc_step}
\end{align}
Similarly, for terms III and IV, using Assumption \ref{assump:regularity} and Lemma \ref{lemma:H_estimates}, we obtain 
\begin{align}
	\text{III} &\leq \delta(s-\tau) \left| H_z^0(t, x) - H_z^0(s, x) \right|  |Z^0(t, x)| + \delta(s-\tau) |H_z^0(s, x)|  \left| Z^0(t, x) - Z^0(s, x) \right| \nonumber \\
	&\leq C \left( (1 + \|Z^0\|^{(0)}) |t-s| + |Z^0(t, x) - Z^0(s, x)| \right) \|Z^0\|^{(0)} + C |Z^0(t, x) - Z^0(s, x)|, \label{eq:term3_step} \\
	\text{IV} &\leq \delta(s-\tau) \int_{\mathbb{A}} \left| (r(y, t, x, a) - r(x, t, x, a)) - (r(y, s, x, a) - r(x, s, x, a)) \right| \Gamma^0(t, x, a) da \nonumber \\
	&\quad + \delta(s-\tau) \int_{\mathbb{A}} \left| r(y, s, x, a) - r(x, s, x, a) \right| \left| \Gamma^0(t, x, a) - \Gamma^0(s, x, a) \right| da \nonumber \\
	&\leq C |t-s| + C \left( (1 + \|Z^0\|^{(0)}) |t-s| + |Z^0(t, x) - Z^0(s, x)| \right). \label{eq:term4_step}
\end{align}
Plugging \eqref{Z_holder} and \eqref{eq:I}--\eqref{eq:term4_step} into \eqref{eq:h_decomp_full} leads to
	$\sup_{\tau\in[0,T], y\in\Rb^d}\langle \bar{f}(\tau, \cdot, y, \cdot) \rangle_t^{(\alpha/2)} \leq C$. 
It can be proved analogously that $\sup_{\tau\in[0,T], y\in\mathbb{R}^d}\langle \bar{f}(\tau, \cdot, y, \cdot) \rangle_x^{(\alpha)} \leq C$. We therefore obtain
\begin{equation*}
    \sup_{\tau\in[0,T], y\in\mathbb{R}^d}\|\bar{f}(\tau, \cdot, y, \cdot)\|_{[\tau, T] \times \mathbb{R}^d}^{(\alpha)} \leq C.
\end{equation*}
By following the above arguments, we can analogously prove that 
\[
\sup_{\tau \in [0,T], y \in \mathbb{R}^d}\|\bar{f}_{\tau}(\tau, \cdot, y, \cdot)\|_{[\tau, T] \times \mathbb{R}^d}^{(\alpha)}\le C\quad \hbox{and}\quad \sup_{\tau \in [0,T], y \in \mathbb{R}^d} \|\bar{f}_y(\tau, \cdot, y, \cdot)\|_{[\tau, T] \times \mathbb{R}^d}^{(\alpha)}\le C. 
\]
By recalling the definition of $\|\bar{f}\|_{[0, T]}^{(\alpha)}$ in \eqref{holder norm}, we then conclude $\|\bar{f}\|_{[0, T]}^{(\alpha)}<\infty$. As the other two estimates in \eqref{holder_estimate} can be proved in a similar (and in fact, simpler) manner, \eqref{holder_estimate} is established. 

By uniform non-degeneracy of $\sigma$ (Assumption~\ref{assump:regularity} (ii)) and H{\"o}lder continuity \eqref{holder_estimate} of coefficients, standard parabolic theory (e.g., \cite{ladyzenskaja_linear_nodate}) implies that for any fixed $(\tau, y) \in [0, T] \times \mathbb{R}^d$, the linear PDEs \eqref{eq:recursive}, \eqref{eq:recursive_tau}, and \eqref{eq:recursive_y} admit unique solutions $(\tilde{V}^{1,1}(\tau, \cdot, y, \cdot),\tilde{V}^{1,2}(\cdot,\cdot))\in C^{2+\alpha}([\tau, T] \times \mathbb{R}^d) \times C^{2+\alpha}([0, T] \times \mathbb{R}^d)$, $w^{1,1}(\tau, \cdot, y, \cdot)\in C^{2+\alpha}([\tau, T] \times \mathbb{R}^d)$, and $v^{1,1}(\tau, \cdot, y, \cdot) \in C^{2+\alpha}([\tau, T] \times \mathbb{R}^d)$, with the Schauder estimate
\begin{equation}\label{schauder}
	[\tilde{V}^{1,1}]^{(2+\alpha)}_{[0, T]} + \|\tilde{V}^{1,2}\|^{(2+\alpha)} + [w^{1,1}]^{(2+\alpha)}_{[0, T]} + [v^{1,1}]^{(2+\alpha)}_{[0, T]} \leq C \left( \|F\|_{[0, T]}^{(2+\alpha)} + \|h\|^{(2+\alpha)} + \|\bar{f}\|_{[0, T]}^{(\alpha)} \right).
\end{equation}
 Now, we claim that
\begin{equation}\label{a claim}
	w^{1,1} = \partial_\tau \tilde{V}^{1,1},\ \ \tilde V_{x\tau}^{1,1} = \tilde V_{\tau x}^{1,1};\quad   \quad v^{1,1} = \partial_y \tilde{V}^{1,1},\ \ \tilde V_{xy}^{1,1} = \tilde V_{yx}^{1,1}.
\end{equation}
Let us focus on the first half of the claim. 
Consider the difference quotient operator, i.e., $\Delta_\eta \phi(\tau, \cdot) := {(\phi(\tau+\eta, \cdot) - \phi(\tau, \cdot))}/{\eta}$ for $\eta\in\mathbb R$, where $\phi$ is a generic function in $(t,x)$. By smoothly extending the discount function $\delta$ to a slightly larger interval $[-\epsilon, T]$ for some $\epsilon > 0$, 
$\Delta_\eta \tilde{V}^{1,1}(\tau, \cdot, y, \cdot)$ is well-defined on $[\tau, T] \times \mathbb{R}^d$ for sufficiently small $\eta  \in \mathbb{R}$. It follows that $\Delta_\eta \tilde{V}^{1,1}$ satisfies the linear PDE
\begin{equation} \label{eq:diff_quotient_pde}
	\begin{cases}
		\partial_t (\Delta_\eta \tilde{V}^{1,1})(\tau, t, y, x) + \frac{1}{2} \text{tr}\left(\sigma \sigma^\top(t, x) \partial_{xx}^2 (\Delta_\eta \tilde{V}^{1,1})(\tau, t, y, x)\right) \\
		\qquad\qquad + H^0_{z}(t, x) \cdot \partial_x (\Delta_\eta \tilde{V}^{1,1})(\tau, t, y, x) + \mathcal{R}_\eta(\tau, t, y, x) = 0, \\
		\Delta_\eta \tilde{V}^{1,1}(\tau, T, y, x) = \Delta_\eta F(\tau, y, x),
	\end{cases}
\end{equation}
where 
\begin{align*}
	\mathcal{R}_\eta(\tau, t, y, x) := &\frac{\delta(t-\tau-\eta)-\delta(t-\tau)}{\eta} \bigg[\int_{\mathbb{A}} (r(y, t, x, a) - r(x, t, x, a))\Gamma^0(t,x,a)da\\&+H^0(t, x) - H^0_{z}(t, x) \cdot Z^0(t, x) \bigg].
\end{align*}
We may estimate the H{\"o}lder norms of $\mathcal{R}_\eta$ and $\Delta_\eta F$ by the same arguments for proving \eqref{holder_estimate}, specifically using the $C^2$-regularity of $\delta$ (Assumption~\ref{assump:regularity} (v)), and the H{\"o}lder regularity of $F$ (Assumption~\ref{assump:regularity} (iv)). 
We can thus obtain from classical Schauder estimates that
\begin{equation*}
	\|\Delta_\eta \tilde{V}^{1,1}(\tau, \cdot, y, \cdot)\|^{(2+\alpha)}_{[\tau, T] \times \mathbb{R}^d} \leq C, \quad \forall (\tau, y) \in [0, T] \times \mathbb{R}^d,
\end{equation*}
where $C>0$ here is independent of $\eta$. It then follows from the Arzel\`a-Ascoli theorem that the family $\{\Delta_\eta \tilde{V}^{1,1}(\tau,\cdot,y,\cdot)\}_{\eta\neq 0}$ is relatively compact in $C^2_{\mathrm{loc}}([\tau,T]\times\mathbb{R}^d)$. Thus, for any sequence $\eta_k \to 0$, there exist a subsequence (without relabeling) and a limit function $W(\tau, \cdot, y, \cdot)$ such that $\Delta_{\eta_k} \tilde{V}^{1,1}\to W$ in $C^2(K)$ for any compact subset $K \subset [\tau, T] \times \mathbb{R}^d$. By taking  $k \to \infty$ in PDE \eqref{eq:diff_quotient_pde} (with $\eta=\eta_k$ therein), we find that $W$ satisfies PDE \eqref{eq:recursive_tau}. By the uniqueness of classical solutions, $W \equiv w^{1,1}$ on $[\tau, T] \times \mathbb{R}^d$. Now, as every convergent subsequence has the same limit $w^{1,1}$, the whole sequence converges, i.e.,
\begin{equation*}
	\lim_{\eta \to 0} \Delta_\eta \tilde{V}^{1,1}(\tau, t, y, x) = w^{1,1}(\tau, t, y, x), \quad \forall (\tau, t, y, x) \in \Delta[0, T] \times \mathbb{R}^d\times \mathbb{R}^d,
\end{equation*}
which gives the existence of the partial derivative $\partial_\tau \tilde{V}^{1,1} = w^{1,1}$. Moreover, the local uniform convergence of the temporal 
derivatives justifies 
\begin{equation} \label{eq:mixed_xtau_consist}
	\partial_x (\partial_\tau \tilde{V}^{1,1}) = \partial_x \left( \lim_{\eta \to 0} \Delta_\eta \tilde{V}^{1,1} \right) = \lim_{\eta \to 0} \Delta_\eta (\partial_x \tilde{V}^{1,1}) = \partial_\tau (\partial_x \tilde{V}^{1,1}).
\end{equation}
Hence, the first half of \eqref{a claim} is proved. As the other half of \eqref{a claim} can be proved  analogously, we conclude that \eqref{a claim} is true.\footnote{By identical reasoning, the order of differentiation can be exchanged between the reference variables $(\tau, y)$ and all other flow variables (i.e., $\partial_t$ and $\partial_{xx}^2$).} By \eqref{a claim} and the uniform bound $[w^{1,1}]_{[0,T]}^{(2)} + [v^{1,1}]_{[0,T]}^{(2)} < \infty$, $\tilde{V}^{1,1}$ and its flow derivatives $(\tilde{V}_t^{1,1}, \tilde{V}_x^{1,1}, \tilde{V}_{xx}^{1,1})$ are Lipschitz continuous w.r.t.\ $\tau$ (resp.\ $y$), uniformly in other variables. Thanks to their H{\"o}lder continuity in $(t, x)$, these functions are jointly continuous on $\Delta[0, T] \times \mathbb{R}^d \times \mathbb{R}^d$. Then, as $\tilde{V}^{1,1}$, $w^{1,1}=\partial_\tau \tilde{V}^{1,1}$, and $v^{1,1}=\partial_y \tilde{V}^{1,1}$ satisfy \eqref{schauder}, we obtain 
\begin{equation*}
	(\tilde{V}^{1,1}, \tilde{V}^{1,2}) \in (\Xi_{[0, T]}^{(2+\alpha)} \cap \Theta_{[0,T]}^{(2)}) \times C^{2+\alpha}([0, T]\times\Rb^d).
 \end{equation*} 
Finally, Feynman--Kac's formula implies that  $(\tilde{V}^{1,1}, \tilde{V}^{1,2})$ has a probabilistic representation as $(V^{1,1}, V^{1,2})$ in \eqref{V^1, V^2, n} (with $n=0$). Hence, as we have proved above, $(V^{1,1}, V^{1,2}) = (\tilde{V}^{1,1}, \tilde{V}^{1,2})$ satisfies \eqref{V^n condition}-\eqref{V^n condition'} (with $n=0$) and is the unique solution in $(\Xi_{[0, T]}^{(2+\alpha)} \cap \Theta_{[0,T]}^{(2)}) \times C^{2+\alpha}([0, T]\times\Rb^d)$ to \eqref{eq:recursive} (with $n=0$). 


{\bf Step 2: Other cases $n\ge 1$:} As $(V^{1,1}, V^{1,2})$ satisfies \eqref{V^n condition}-\eqref{V^n condition'} (with $n=0$), we may repeat the same arguments in Step 1 above to show that $\pi^2\in\mathcal A$ and $(V^{2,1}, V^{2,2})$ satisfies \eqref{V^n condition}-\eqref{V^n condition'} (with $n=1$) and is the unique solution in $(\Xi_{[0, T]}^{(2+\alpha)} \cap \Theta_{[0,T]}^{(2)}) \times C^{2+\alpha}([0, T]\times\Rb^d)$ to \eqref{eq:recursive} (with $n=1$). Continuing this procedure shows that all the properties hold for all $n\ge 1$. 

\subsection{Proof of Proposition~\ref{prop:1} (ii)}    
First, it is clear  that $\|V^{n+1,2}\|^{(0)} \leq \|h\|^{(0)}$. To establish uniform estimates for higher-order spatial and temporal derivatives, we will use probabilistic representation formulae, as proposed in \cite{ma2025convergence}. To this end, fix $(t, x) \in [0, T) \times \mathbb{R}^d$ and consider the auxiliary state process
\begin{equation*}
	X_s^{t, x} = x + \int_t^s \sigma\left(l, X_l^{t, x}\right) d W_l, \quad s \in [t, T].
\end{equation*}
As $(V^{n+1,1}, V^{n+1,2})$ is the solution to \eqref{eq:recursive}, Feynman--Kac's formula implies
\begin{align}
	V^{n+1,1}(\tau, t, y, x) &= \mathbb{E} \left[ F(\tau, y, X_T^{t, x}) + \int_t^T f^{n+1,1}(\tau, s, y, X_s^{t, x}) ds \right], \label{eq:rep1} \\
	V^{n+1,2}(t, x) &= \mathbb{E} \left[ h(X_T^{t, x}) + \int_t^T f^{n+1,2}(s, X_s^{t, x}) ds \right], \label{eq:rep2} \\
	V_y^{n+1,1}(\tau, t, y, x) &= \mathbb{E} \left[ F_y(\tau, y, X_T^{t, x}) + \int_t^T f_y^{n+1,1}(\tau, s, y, X_s^{t, x}) ds \right], \nonumber
\end{align}
where 
\begin{align}
	f^{n+1,1}(\tau, s, y, x) &:= H_z^n(s, x) \cdot \left[ V_x^{n+1,1}(\tau, s, y, x) - \delta(s-\tau) Z^n(s, x) \right]\nonumber \\
	&\qquad + \delta(s-\tau) \left[ H^n(s, x) + \int_{\mathbb{A}} (r(y, s, x, a) - r(x, s, x, a)) \Gamma^n(s, x, a) da \right],\label{eq:f1} \\
	f^{n+1,2}(s, x) &:= H_z^n(s, x) \cdot V_x^{n+1,2}(s, x).\label{eq:f2}
\end{align}
By applying the Bismut--Elworthy--Li formula in \cite{Bismut1984LargeDeviation, elworthy1994formulae, ma2002representation} and following the arguments in \cite{ma2025convergence}, we obtain representation formulae for 
derivatives of $V^{n+1,1}$ and $V^{n+1,2}$ in $x$, as well as 
$V_{yx}^{n+1,1}$, i.e., 
\begin{align}
	V_x^{n+1,1}(\tau,t, y,x) &= \mathbb{E}\left[ (\nabla X_T^{t, x})^\top F_x(\tau, y, X_T^{t, x}) + \int_t^T f^{n+1,1}(\tau, s, y, X_s^{t, x}) N_s^{t, x} d s \right], \label{rep_first1} \\
	V_{xx}^{n+1,1}(\tau,t,y, x) &= \mathbb{E}\left[ (\nabla X_T^{t, x})^\top F_{xx}(\tau, y, X_T^{t, x}) \nabla X_T^{t, x} + F_x(\tau, y, X_T^{t, x}) \otimes \nabla^2 X_T^{t, x} \right. \nonumber \\
	&\quad \left. + \int_t^T \left[ N_s^{t, x} \left( f_x^{n+1,1}(\tau, s, y, X_s^{t, x}) \right)^\top \nabla X_s^{t, x} + f^{n+1,1}(\tau, s, y, X_s^{t, x}) \nabla N_s^{t, x} \right] d s \right], \label{rep_second1} \\
	V_{yx}^{n+1,1}(\tau,t, y,x) &= \mathbb{E}\left[ (\nabla X_T^{t, x})^\top F_{yx}(\tau, y, X_T^{t, x}) + \int_t^T f_y^{n+1,1}(\tau, s, y, X_s^{t, x}) N_s^{t, x} d s \right], \label{rep_first1y} \\
	V_x^{n+1,2}(t,x) &= \mathbb{E}\left[ (\nabla X_T^{t, x})^\top h_x(X_T^{t, x}) + \int_t^T f^{n+1,2}(s, X_s^{t, x}) N_s^{t, x} d s \right], \label{rep_first2} \\
	V_{xx}^{n+1,2}(t, x) &= \mathbb{E}\left[ (\nabla X_T^{t, x})^\top h_{xx}(X_T^{t, x}) \nabla X_T^{t, x} + h_x(X_T^{t, x}) \otimes \nabla^2 X_T^{t, x} \right. \nonumber \\
	&\quad \left. + \int_t^T \left[ N_s^{t, x} \left( f_x^{n+1,2}(s, X_s^{t, x}) \right)^\top \nabla X_s^{t, x} + f^{n+1,2}(s, X_s^{t, x}) \nabla N_s^{t, x} \right] d s \right]. \label{rep_second2} 
\end{align}
Here, the $i$-th column of $\nabla X\in\mathbb R^d$ is $\partial_{x_i} X$ and $\nabla^2 X \in \mathbb{R}^{d \times d \times d}$ is a tensor with $\nabla_j^2 X \in \mathbb{R}^{d \times d}$ denoting $\partial_{x_j} \nabla X$. Moreover, by using Einstein summation for repeated indices,
\begin{equation*}
	\begin{aligned}
		& \nabla X_s^{t, x} = I_d + \int_t^s \sigma_x^i\left(l, X_l^{t, x}\right) \nabla X_l^{t, x} d W_l^i, \\
		& \nabla_j^2 X_s^{t, x} = \int_t^s \left[ \sigma_{xx_k}^i\left(l, X_l^{t, x}\right) \left(\nabla X_l^{t, x}\right)^{kj} \nabla X_l^{t, x} + \sigma_x^i\left(l, X_l^{t, x}\right) \nabla_j^2 X_l^{t, x} \right] d W_l^i, \\
		& \left[ \phi_x\left(X_s^{t, x}\right) \otimes \nabla^2 X_s^{t, x} \right]_{ij} := \phi_x\left(X_s^{t, x}\right) \cdot \left( \nabla_j^2 X_s^{t, x} \right)^i, \quad \text{for } \phi = F, h,
	\end{aligned}
\end{equation*}
where $\sigma^i$ is the $i$-th column of $\sigma$ and $(\nabla_j^2 X)^i$ is the $i$-th column of $\nabla_j^2 X$. Similarly, by setting $\check{\sigma}:=\sigma^{-1}$ to be the inverse matrix, we define
\begin{equation*}
	\begin{aligned}
		& N_s^{t, x}:=\frac{1}{s-t} \int_t^s\left(\check{\sigma}\left(l, X_l^{t, x}\right) \nabla X_l^{t, x}\right)^{\top} d W_l, \\
		& \nabla_i N_s^{t, x}:=\frac{1}{s-t} \int_t^s\left(\left(\nabla X_l^{t, x}\right)^{ij} \check{\sigma}_{x_j}\left(l, X_l^{t, x}\right) \nabla X_l^{t, x}+\check{\sigma}\left(l, X_l^{t, x}\right) \nabla_i^2 X_l^{t, x}\right)^{\top} d W_l.
	\end{aligned}
\end{equation*}
Furthermore, it can be checked directly that 
\begin{equation}\label{estimate_1}
	\mathbb{E}\left[\left|\nabla X_s^{t, x}\right|^2+\left|\nabla^2 X_s^{t, x}\right|^2\right] \leq C e^{C(s-t)}, \quad \mathbb{E}\left[\left|N_s^{t, x}\right|^2+\left|\nabla N_s^{t, x}\right|^2\right] \leq \frac{C e^{C(s-t)}}{s-t} .
\end{equation}
Now, to establish a uniform bound for $\|V_{x}^{n,2}\|^{(0)}$, consider a local domain $[T-\epsilon, T] \times \mathbb{R}^d$ for some $\epsilon>0$. For any $(s, x) \in [T-\epsilon, T] \times \mathbb{R}^d$, by the boundedness of $H_z$ in Lemma \ref{lemma:H_estimates}, $f^{n+1,2}$ in \eqref{eq:f2} satisfies
\begin{align}
	|f^{n+1,2}(s, x)| \leq C \|V_x^{n+1,2}\|^{(0)}_{[T-\epsilon, T]\times\Rb^d}, \label{eq:source2_growth2}
\end{align}
where $C > 0$ is independent of $n$. Combining \eqref{rep_first2}, \eqref{estimate_1}, and \eqref{eq:source2_growth2}, we obtain 
\begin{align*}
	|V^{n+1,2}_{x}(t, x)| &\leq \mathbb{E}[|\nabla X_T^{t, x}|]  \|h_x\|^{(0)} + C \|V_x^{n+1,2}\|^{(0)}_{[T-\epsilon, T]\times\Rb^d}\int_t^T \mathbb{E}\left[|N_s^{t, x}| \right] ds \nonumber \\
	&\leq C e^{C(T-t)} \left[ 1 + \|V_x^{n+1,2}\|^{(0)}_{[T-\epsilon, T]\times\Rb^d} \int_t^T \frac{1}{\sqrt{s-t}} ds \right] \\
	&\leq C e^{C\epsilon} \left[ 1 + \sqrt{\epsilon}\|V_x^{n+1,2}\|^{(0)}_{[T-\epsilon, T]\times\Rb^d} \right].
\end{align*}
 Choosing $\epsilon > 0$ sufficiently small such that $C e^{C\epsilon} \sqrt{\epsilon} \leq \frac{1}{2}$, we obtain the local bound
\begin{equation*}
	\|V_x^{n+1,2}\|^{(0)}_{[T-\epsilon, T]\times\Rb^d} \leq 2 Ce^{C\epsilon}. \label{eq:recursive_bound2}
\end{equation*}
To extend this bound to the global domain $[0, T] \times \mathbb{R}^d$, we partition $[0,T]$ into segments $0 = t_0 < t_1 < \dots < t_m = T$ with $t_i - t_{i-1} < \epsilon$. We proceed by backward induction from $i=m$ down to $i=1$. Assume that for the $i$-th segment $[t_{i-1}, t_i]$, the gradients on  $[t_i, T] \times \mathbb{R}^d$ are uniformly bounded by $A_i := \sup_{n \geq 1} \| V_{x}^{n,2}\|^{(0)}_{[t_i, T]\times \mathbb{R}^d} < \infty$. For any $(t, x) \in [t_{i-1}, t_i] \times \mathbb{R}^d$, by \eqref{rep_first2} and the inductive hypothesis,
\begin{align*}
	|V_{x}^{n+1,2}(t, x)| &\leq \mathbb{E}\left[ | \nabla X_{t_i}^{t, x} | | V_x^{n+1,2}( t_i, X_{t_i}^{t, x}) | \right] + \mathbb{E} \left[ \int_t^{t_i} |f^{n+1,2}(s, X^{t, x}_s)| |N_s^{t, x}| ds \right] \\
	&\leq C e^{C\epsilon} A_i + C e^{C\epsilon} \sqrt{\epsilon} \|V_x^{n+1,2}\|^{(0)}_{[t_{i-1}, t_i]\times\mathbb{R}^d}.
\end{align*}
Rearranging terms with $C e^{C\epsilon} \sqrt{\epsilon} \leq \frac{1}{2}$, we find
	$\|V_x^{n+1,2}\|^{(0)}_{[t_{i-1}, t_i]\times\mathbb{R}^d} \leq 2 C e^{C\epsilon} A_i$. 
As the number of segments $m$ is finite and independent of $n$, we conclude that $\sup_{n \geq 1} \|V_x^{n,2}\|^{(0)} \leq C.$
On the other hand, by Lemma \ref{lemma:H_estimates} and $V^{n+1,1}_{xy}=V^{n+1,1}_{yx}$ (from part (i)), $f^{n+1,1}$ in \eqref{eq:f1} satisfies
\begin{equation*}
	|f^{n+1,1}_{y}(\tau, t, y, x)| \leq C \left( 1 + |V^{n+1,1}_{yx}(\tau, t, y, x)| \right),\quad (\tau, t, y, x) \in \Delta[0, T] \times \mathbb{R}^d \times \mathbb{R}^d.
\end{equation*}
By plugging this into \eqref{rep_first1y}, an analogous argument shows 
	$\sup_{n \geq 1} [V_{xy}^{n,1}]^{(0)}_{[0,T]} =\sup_{n \geq 1} [V_{yx}^{n,1}]^{(0)}_{[0,T]} \leq C.$

Next, we aim for a uniform bound for $[V^{n,1}_{x}]^{(0)}_{[0,T]}$. 
For any $(\tau, s, y, x) \in \Delta[0, T] \times \mathbb{R}^d \times \mathbb{R}^d$, by Lemma \ref{lemma:H_estimates}, Assumption \ref{assump:regularity}, and the established uniform bounds for $\|V^{n,2}\|^{(0)}$ and $\|V^{n,2}_x\|^{(0)}$, we have
\begin{align}
	|f^{n+1,1}(\tau, s,y, x)| &\leq |H_z^n(s, x)|  [ |V^{n+1,1}_{x}(\tau, s, y, x)| +\|\delta\|^{(0)} |Z^n(s,x)| ] + \|\delta\|^{(0)}( | H^n(s, x)|+2C_0) \nonumber \\
	&\leq C \left(1+ |V^{n+1,1}_{x}(\tau, s, y, x)| + |Z^n(s,x)| \right)\nonumber\\
	&\leq C \left(1+ |V^{n+1,1}_{x}(\tau, s, y, x)| + |V^{n,1}_{x}(s, s, x, x)| \right), \label{eq:source_growth}
\end{align}
where $C > 0$ is a constant independent of $n$. Combining \eqref{rep_first1}, \eqref{estimate_1}, and \eqref{eq:source_growth}, we see that for any $(\tau, t, y, x)$ in the local domain $\Delta[T-\epsilon, T] \times \mathbb{R}^d \times \mathbb{R}^d$,
\begin{align*}
	| V^{n+1,1}_x(\tau, t, y, x)| &\leq \mathbb{E}[|\nabla X_T^{t, x}|] [F_x]^{(0)}_{[0, T]} + \int_t^T \mathbb{E}\left[ |f^{n+1,1}(\tau, s, y, X_s^{t, x})| |N_s^{t, x}| \right] ds \nonumber \\
	&\leq C e^{C(T-t)} \left[ 1 + \left(1+ [V^{n+1,1}_x]^{(0)}_{[T-\epsilon, T]} + [V^{n,1}_x]^{(0)}_{[T-\epsilon, T]}  \right) \int_t^T \frac{1}{\sqrt{s-t}} ds \right] \\
	&\leq C e^{C\epsilon} \left[ 1 + \sqrt{\epsilon} \left( [ V^{n+1,1}_x]^{(0)}_{[T-\epsilon, T]} + [V^{n,1}_x]^{(0)}_{[T-\epsilon, T]} \right) \right].
\end{align*}
By choosing $\epsilon > 0$ sufficiently small such that $C e^{C\epsilon} \sqrt{\epsilon} \leq \frac{1}{3}$, the above yields the recursive inequality
	$[V^{n+1,1}_x]^{(0)}_{[T-\epsilon, T]} \leq \frac{1}{2} [ V^{n,1}_x]^{(0)}_{[T-\epsilon, T]} + C$. \label{eq:recursive_bound}
By iterating this inequality from $n$ to $0$, we obtain
\begin{align*}
	[V_x^{n,1}]^{(0)}_{[T-\epsilon, T]} &\leq \left( \frac{1}{2} \right)^n  [ V_x^{0,1}]^{(0)}_{[T-\epsilon, T]} + \sum_{k=0}^{n-1} \left( \frac{1}{2} \right)^k C \leq  [V_x^{0,1}]^{(0)}_{[0, T]} + 2C, 
\end{align*}
which gives a uniform bound on the final time segment. The extension of this to the global domain $\Delta[0, T] \times \mathbb{R}^d \times \mathbb{R}^d$ follows from the same backward induction argument used above for $\|V^{n,2}_x\|^{(0)}$, thereby giving $\sup_{n\ge 1}[V_x^{n,1}]^{(0)}<\infty$. This, along with \eqref{eq:rep1} and \eqref{eq:source_growth}, also implies $\sup_{n \geq 1} [V^{n,1}]^{(0)}_{[0,T]} < \infty$.

Next, we aim for uniform bounds for $[V_{xx}^{n,1}]^{(0)}_{[0,T]}$ and $\|V_{xx}^{n,2}\|^{(0)}$ respectively. Note that
\begin{align}
	f_x^{n+1,1}(\tau, s, y, x) &= \left[ H_{zx}^n(s,x) + H_{zz}^n(s,x)\cdot Z^n_x(s,x) \right] \cdot \left[ V_x^{n+1,1}(\tau, s, y, x) - \delta(s-\tau) Z^n(s,x) \right] \nonumber \notag\\
	&\quad + H_z^n(s, x) \cdot \left[ V_{xx}^{n+1,1}(\tau, s, y, x) - \delta(s-\tau)Z^n_x(s,x) \right] \nonumber \notag\\
	&\quad + \delta(s-\tau) \bigg[ H_x^n(s, x) + H_z^n(s, x)\cdot Z_x^n(s,  x)\notag\\	&\quad+\int_{\mathbb{A}}(r_x(y,s,x,a)-r_y(x,s,x,a)-r_x(x,s,x,a))\Gamma^n(s,x,a)da\notag\\	&\quad+\int_{\mathbb{A}}(r(y,s,x,a)-r(x,s,x,a))\partial_x\Gamma^n(s,x,a)da \bigg],\label{f^1_x}\\
	f_x^{n+1,2}(s,x) &= \left[ H_{zx}^n(s,x) + H_{zz}^n(s,x)\cdot Z^n_x(s,x) \right] \cdot  V_x^{n+1,2}(s, x) 
	 + H_z^n(s, x) \otimes V_{xx}^{n+1,2}(s,x).\label{f^2_x}
\end{align}
Also, with the established uniform bounds for $\|V^{n,2}\|^{(0)}$, $\|V^{n,2}_x\|^{(0)}$, $[V^{n,1}_x]_{[0,T]}^{(0)}$, and $[V^{n,1}_{xy}]_{[0,T]}^{(0)}$, we have 
\begin{align*}
	|Z^n(s,x)|&= |V_x^{n,1}(s, s, x, x) + G_z(s, x, V^{n,2}(s, x))\cdot  V_x^{n,2}(s, x)|\leq C,\\
	|Z^n_x(s,x)|&=|V_{xy}^{n,1}(s, s, x, x)+V_{xx}^{n,1}(s, s, x, x)+ [G_{zx}(s, x, V^{n,2}(s, x))\\&\quad+G_{zz}(s, x, V^{n,2}(s, x))\cdot V_x^{n,2}(s, x)]\cdot  V_x^{n,2}(s, x) + G_z(s, x, V^{n,2}(s, x))\otimes V_{xx}^{n,2}(s, x)|\\&\leq C(1+|V_{xx}^{n,1}(s, s, x, x)|+|V_{xx}^{n,2}(s, x)|).
\end{align*}
Using the above estimates, Lemma \ref{lemma:H_estimates}, and \eqref{eq:Gamma_x_est}, we deduce from \eqref{f^1_x}-\eqref{f^2_x} that
\begin{align*}
	&|f_x^{n+1,1}(\tau, s, y, x)| \leq C \left[ 1 + |V_{xx}^{n,1}(s, s, x, x)| + |V_{xx}^{n,2}(s, x)| + |V_{xx}^{n+1,1}(\tau, s, y, x)| \right], \\
	&|f_x^{n+1,2}(s, x)| \leq C \left[ 1 + |V_{xx}^{n,1}(s, s, x, x)| + |V_{xx}^{n,2}(s, x)| + |V_{xx}^{n+1,2}(s, x)| \right],\\
	&|f^{n+1,1}(\tau, s, y, x)| + |f^{n+1,2}(s, x)| \leq C,
\end{align*}
where $C>0$ is a constant independent of $n$. Plugging these into the formulae \eqref{rep_second1} and \eqref{rep_second2} and utilizing the estimates \eqref{estimate_1}, we have, on $\Delta[T-\epsilon, T] \times \Rb^d$, that
\begin{align*}
	|V_{xx}^{n+1,1}(\tau, t, y, x)| &\leq C e^{C\epsilon} \left( [F_{xx}]_{[0, T]}^{(0)} + [F_{x}]_{[0, T]}^{(0)} \right) + \int_t^T \left( \mathbb{E}[|N^{t, x}_s| |f_x^{n+1,1}(\tau, s, y,X_s^{t, x})| |\nabla X^{t, x}_s|]\right.\\&\left. + \mathbb{E}[|f^{n+1,1}(\tau, s,y, X_s^{t, x})| |\nabla N^{t, x}_s|] \right) ds \\
	&\leq C e^{C\epsilon} + C e^{C\epsilon} \sqrt{\epsilon} \left( [V_{xx}^{n,1}]^{(0)}_{[T-\epsilon, T]} + \|V_{xx}^{n,2}\|^{(0)}_{[T-\epsilon, T]\times\Rb^d}+ [V_{xx}^{n+1,1}]^{(0)}_{[T-\epsilon, T]} + 1 \right),\\
	|V_{xx}^{n+1,2}(t, x)| &\leq C e^{C\epsilon} \left( \|h_{xx}\|^{(0)} + \|h_x\|^{(0)} \right) + \int_t^T \left( \mathbb{E}[|N^{t, x}_s| |f_x^{n+1,2}(s, X_s^{t, x})| |\nabla X^{t, x}_s|]\right.\\&\left. + \mathbb{E}[|f^{n+1,2}(s, X_s^{t, x})| |\nabla N^{t, x}_s|] \right) ds \\
	&\leq C e^{C\epsilon} + C e^{C\epsilon} \sqrt{\epsilon} \left( [V_{xx}^{n,1}]^{(0)}_{[T-\epsilon, T]} + \|V_{xx}^{n,2}\|^{(0)}_{[T-\epsilon, T]\times\Rb^d}+ \|V_{xx}^{n+1,2}\|^{(0)}_{[T-\epsilon, T]\times\Rb^d}+ 1 \right).
\end{align*}
Choosing $\epsilon > 0$ sufficiently small such that $2C e^{C\epsilon} \sqrt{\epsilon} \leq \frac{1}{3}$, we get the local recursive inequality
\begin{align*}
[V_{xx}^{n+1,1}]^{(0)}_{[T-\epsilon, T]}+\|V_{xx}^{n+1,2}\|^{(0)}_{[T-\epsilon, T]\times\Rb^d} \leq \frac{1}{2} \left( [V_{xx}^{n,1}]^{(0)}_{[T-\epsilon, T]}+\|V_{xx}^{n,2}\|^{(0)}_{[T-\epsilon, T]\times\Rb^d} \right)+ C,
\end{align*}
which gives a uniform bound for the second derivatives on the final time segment. The extension of this to the entire domain follows from the same arguments used for the first derivatives. 

Finally, uniform boundedness of the time derivatives $V_t^{n+1,1}$ and $V_t^{n+1,2}$ follows from the recursive linear PDE system \eqref{eq:recursive}, given that all spatial derivatives have been shown to be uniformly bounded. Combining all the established  uniform bounds yields \eqref{uniformbounded}. 

\section{The Convergence of the PIA}\label{sec:convergence}
This section is devoted to Theorem~\ref{thm:convergence}, 
which establishes the PIA's convergence under time inconsistency. Compared with convergence results in the standard time-consistent case (see e.g., \cite{huang2025convergence, tran2025policy, ma2025convergence}), Theorem~\ref{thm:convergence} is distinct in two ways. First, it does not require any knowledge of the target value function---which is now an {\it equilibrium value function}, not even known to exist a priori. This is in constrat to the time-consistent case, where the target is the optimal value function, clearly-specified with well-understood regularity and PDE characterizartion. Second, Theorem~\ref{thm:convergence} establishes convergence {\it without} the ``policy improvement'' property (that the PIA keeps generating increasingly better values). This property holds in the time-consistent case, underlying the convergence analysis therein, but fails in general under time inconsistency (as noted in \cite{dai2023learning}). 


The proof of Theorem~\ref{thm:convergence} (see Section~\ref{subsect:proof of thm} below) shows that the value iterates $\{(V^{n,1}, V^{n,2})\}_{n \geq 1}$ in our PIA forms a Cauchy sequence (in particular, the distance between $(V^{n,1}, V^{n,2})$ and $(V^{n+1,1}, V^{n+1,2})$ decays exponentially in $n$) in a suitable complete space, thereby adimiting a limit without the need of a known target value function or policy improvement. The precise result is as follows. 


\begin{theorem} \label{thm:convergence}
	Assume Assumption \ref{assump:regularity}. Initiate the PIA (Section~\ref{subsect:pia}) with an arbitrary $(V^{0,1}, V^{0,2})$ satisfying \eqref{V^0 condition}. 
    Then, $\{(V^{n,1}, V^{n,2})\}_{n\ge 1}$ converges in the space $\Theta_{[0, T]}^{(2)} \times C^{2}([0, T] \times \mathbb{R}^d)$ to some $(V^{*,1}, V^{*,2})$, and $\{\pi^n\}_{n\ge 1}$ converges uniformly on $[0,T]\times\mathbb R^d$ to some $\pi^*\in\mathcal A$, both at an exponential rate. 
    Specifically, there exist $C > 0$ and $p \in (0, 1)$ such that
	\begin{equation}\label{exp convergence V and pi}
		[V^{n,1} - V^{*,1}]_{[0, T]}^{(2)} + \|V^{n,2} - V^{*,2}\|^{(2)} + \|\pi^n - \pi^*\|^{(0)} \leq C p^n, \quad \forall n \geq 1.
	\end{equation}
    Moreover, $(V^{*,1}, V^{*,2})$ is the unique solution in $\Theta_{[0, T]}^{(2)} \times C^{2}([0, T] \times \mathbb{R}^d)$ to the EEHJB equation \eqref{eq:EEHJB} and $\pi^*\in\mathcal A$ is an equilibrium policy (Definition~\ref{def:equilibrium}) that takes the form 
	\begin{equation}\label{pi^*}
		\pi^*(t,x)(a)=\Gamma\bigl(t,x,Z^*(t,x),a\bigr),
	\end{equation}
	with 
	$
	Z^*(t,x):= V^{*,1}_x(t,t,x,x) + G_z(t,x,V^{*,2}(t,x)) \cdot V_x^{*,2}(t,x),
	$
	for all $(t,x,a)\in[0,T]\times\mathbb{R}^d\times\mathbb{A}$. 
\end{theorem}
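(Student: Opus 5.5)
The plan is to show that the differences $D^{n,1}:=V^{n+1,1}-V^{n,1}$ and $D^{n,2}:=V^{n+1,2}-V^{n,2}$ contract, in a suitably weighted version of the norm $[\cdot]^{(2)}_{[0,T]}+\|\cdot\|^{(2)}$, from one iteration to the next. This makes $\{(V^{n,1},V^{n,2})\}$ Cauchy in the complete space $\Theta^{(2)}_{[0,T]}\times C^2([0,T]\times\mathbb{R}^d)$.

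\textbf{Step 1: the difference equations.} Subtracting \eqref{eq:recursive} at levels $n$ and $n-1$, $(D^{n,1},D^{n,2})$ solves a linear system with zero terminal data and the same principal part $\partial_t+\frac12\mathrm{tr}(\sigma\sigma^\top\partial_{xx})$. The source term is the difference of the drift/source nonlinearities evaluated at $Z^n$ and at $Z^{n-1}$. By the mean value theorem and Lemma~\ref{lemma:H_estimates} (boundedness of $H_z,H_{zz}$; $|\Gamma_z|\le C\Gamma$; linear growth of the mixed derivatives), together with the uniform bounds \eqref{uniformbounded} from Proposition~\ref{prop:1}(iii), the source and its $x$-gradient are controlled as follows:
\[
|g^{n}|\le C\big(|Z^n-Z^{n-1}|+|D^{n,1}_x|+|D^{n,2}_x|\big),\qquad |g^{n}_x|\le C\big(|Z^n-Z^{n-1}|+|Z^n_x-Z^{n-1}_x|+|D^{n,1}_{xx}|+|D^{n,2}_{xx}|+\cdots\big).
\]
Here $Z^n-Z^{n-1}$ and its gradient are bounded by $[D^{n-1,1}]^{(2)}+[D^{n-1,1}_{xy}]^{(0)}+\|D^{n-1,2}\|^{(2)}$, with local Lipschitz constants of $G_z,G_{zz},G_{zx}$ from Assumption~\ref{assump:regularity}(iii) on the bounded range of $V^{n,2}$.

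Two points need care here. First, $Z^n_x$ involves $V_{xy}^{n,1}$, so the $y$-gradient differences $D^{n,1}_{xy}=D^{n,1}_{yx}$ must be included in the norm. These obey their own Bismut--Elworthy--Li representation \eqref{rep_first1y}, with source depending on $y$-derivatives only through $r_y$ and $H_z^n\cdot V^{n+1,1}_{yx}$. Second, $\partial_x$ of the drift difference generates $H_{zz}^n\, Z^n_x\,(Z^n-Z^{n-1})$-type cross terms. These are harmless because $Z^n_x$ is uniformly bounded by \eqref{uniformbounded}.

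\textbf{Step 2: local contraction.} Plug the source bounds into the representations \eqref{rep_first1}--\eqref{rep_second2} with zero terminal data, using the estimates \eqref{estimate_1}. The key feature is that every source term carries the weight $N_s^{t,x}$ or $\nabla N_s^{t,x}$, so its contribution is integrated against $(s-t)^{-1/2}$. This gives, on $[T-\epsilon,T]$,
\[
\mathcal{N}_\epsilon(D^n)\le Ce^{C\epsilon}\sqrt{\epsilon}\,\big(\mathcal{N}_\epsilon(D^n)+\mathcal{N}_\epsilon(D^{n-1})\big),
\]
where $\mathcal{N}_\epsilon$ collects the sup norms of the function, its $x$- and $xx$-derivatives, and the $xy$-derivative of both components. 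Choosing $\epsilon$ small yields $\mathcal{N}_\epsilon(D^n)\le \frac12\mathcal{N}_\epsilon(D^{n-1})$. Time derivatives are then recovered from the PDE itself.

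\textbf{Step 3: extension to the whole horizon (the main obstacle).} The contraction is only local in time. Extending it backwards is delicate because of the non-locality: on $[t_{i-1},t_i]$ the difference equation has nonzero terminal data $D^n(t_i,\cdot)$, which is only known to decay like $p^n$ from the previous step. Moreover, $D^{n,1}(\tau,\cdot)$ for $\tau<t_i$ is not covered by the earlier segment, since the diagonal feeds in. I would handle this in two stages. First, apply the segment argument with the reference variable $\tau$ ranging over all of $[0,t_i]$, since the source's dependence on $\tau$ is only through $\delta$. Second, run an induction of the form
\[
\mathcal{N}_{[t_{i-1},T]}(D^n)\le C\,\mathcal{N}_{[t_i,T]}(D^n)+\tfrac12\,\mathcal{N}_{[t_{i-1},T]}(D^{n-1}),
\]
which by induction on $i$ gives bounds $C_i n^{i} 2^{-n}$. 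Any $p\in(\frac12,1)$ then absorbs the polynomial factors, since the number of segments $m$ is independent of $n$. Alternatively, and more cleanly, I would use the weighted norm $\sup_t e^{-\beta(T-t)}|\cdot|$: the kernel $(s-t)^{-1/2}e^{-\beta(s-t)}$ integrates to $O(\beta^{-1/2})$, so a single global contraction follows for large $\beta$. This is what I would try first.

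\textbf{Step 4: passage to the limit.} Summing the geometric bounds shows $\{(V^{n,1},V^{n,2})\}$ is Cauchy, so it has a limit $(V^{*,1},V^{*,2})$ in $\Theta^{(2)}_{[0,T]}\times C^2$, with tail $\le Cp^n$. The $V_{xy}$ control passes to the limit as well, so $Z^n\to Z^*$ uniformly at rate $p^n$. By $|\Gamma_z|\le C\Gamma$ and the boundedness of $Z^n$, this gives $\|\pi^n-\pi^*\|^{(0)}\le Cp^n$. Passing to the limit in \eqref{eq:recursive}, using uniform convergence of all terms, shows that $(V^{*,1},V^{*,2})$ solves \eqref{eq:EEHJB} with $\pi^*=\Gamma(\cdot,Z^*,\cdot)$.

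\textbf{Step 5: admissibility, equilibrium property, and uniqueness.} Admissibility of $\pi^*$ follows exactly as in Step~1 of the proof of Proposition~\ref{prop:1}, since $Z^*,Z^*_x$ are bounded. Feynman--Kac identifies $V^{*,i}$ with $V^{\pi^*,i}$. The derivation leading to \eqref{I} is then rigorous, and the Gibbs form maximizes $\mathcal{I}(t,x,\cdot)$ with $\mathcal{I}(t,x,\pi^*)=0$, so \eqref{equilibrium_con} holds. Uniqueness in $\Theta^{(2)}\times C^2$ follows by applying the same difference estimate to two solutions of \eqref{eq:EEHJB}, which gives $\mathcal{N}(D)\le\frac12\mathcal{N}(D)$.
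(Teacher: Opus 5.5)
Your proposal is correct. Apart from how you globalize in time, it is the paper's proof: the same Bismut--Elworthy--Li representations \eqref{rep_first1}--\eqref{rep_second2} for the increments, the same source-difference bounds (cf.\ \eqref{eq:df1_bound}, \eqref{eq:df1x_bound}) built on \eqref{uniformbounded}, Cauchy convergence in $\Theta^{(2)}_{[0,T]}\times C^2([0,T]\times\mathbb{R}^d)$, passage to the limit in \eqref{eq:recursive}, and verification.

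The paper treats the derivatives hierarchically rather than in one bundled norm. It first closes the block $(\Delta V^{n,1}_x,\Delta V^{n,2}_x,\Delta V^{n,2})$, then handles $\Delta V^{n,1}_{xy}$, then the Hessians, and finally recovers $\partial_t$ from the PDE. It globalizes exactly by your first scheme: segments of length $\epsilon$ with $\tau$ ranging over $[0,t_i]$ in the segment norm, the recursion \eqref{eq:local_recursion_grad}, backward induction in $i$, and Lemma~\ref{lemma:seq_decay} to absorb the polynomial factors.

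The weighted norm $\sup e^{-\beta(T-t)}|\cdot|$ that you would try first also works, for three reasons. The increments have zero terminal data, so you can use the representations on all of $[t,T]$. The non-local term $\Delta V^{n-1,1}_x(s,s,x,x)$ sits at flow time $s$, so the weight controls it. Once you include the factor $e^{C(s-t)}$ from \eqref{estimate_1}, the kernel $(s-t)^{-1/2}e^{-(\beta-C)(s-t)}$ has mass $O(\beta^{-1/2})$. This gives a single global contraction with no partition, no backward induction and no Lemma~\ref{lemma:seq_decay}. It even lets you make $p$ as small as you like, at the price of the constant $e^{\beta T}$. The paper's scheme, by contrast, stays in the unweighted sup norms that define $[\cdot]^{(2)}_{[0,T]}$.

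One correction to Step 5. An arbitrary solution in $\Theta^{(2)}_{[0,T]}\times C^2$ need not possess $V_{xy}$, so $Z_x$ and your full $\mathcal{N}$ are not available there. Run the uniqueness estimate only on the first-order block, as the paper does. That yields $Z^1=Z^2$, and then $V^{1,1}=V^{2,1}$ follows from the Feynman--Kac representation \eqref{eq:rep1}.
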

To facilitate the proof of Theorem \ref{thm:convergence}, we first establish the following technical lemma.
\begin{lemma} \label{lemma:seq_decay}
	Let $\{a_n\}_{n \geq 0}$ be a sequence of non-negative real numbers satisfying 
	\begin{equation}\label{lemma:rec}
		a_n \leq \rho a_{n-1} + Cq^n, \quad n \geq 1,\quad \hbox{for some $\rho, q \in (0,1)$ and $C > 0$}. 
	\end{equation}
	Then, there exist $C_1 > 0$ and $p \in (0,1)$ such that $a_n \leq C_1 p^n$ for all $n \geq 0$.
\end{lemma}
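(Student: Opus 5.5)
We unroll the recursion \eqref{lemma:rec} and then bound the resulting geometric-type sum by a single geometric sequence. Iterating $a_n \le \rho a_{n-1} + Cq^n$ down to $a_0$ gives, by a straightforward induction on $n$,
\begin{equation*}
a_n \le \rho^n a_0 + C\sum_{k=1}^{n} \rho^{n-k} q^k, \qquad n\ge 1 .
\end{equation*}
The first term is already of the required form. For the sum, we set $p_0:=\max\{\rho,q\}\in(0,1)$ and bound each summand by $\rho^{n-k}q^k\le p_0^n$. This yields
\begin{equation*}
a_n \le a_0 p_0^n + C n p_0^n .
\end{equation*}

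It remains to absorb the linear factor $n$. We fix any $p\in(p_0,1)$, for instance $p:=(1+p_0)/2$. Since $r:=p_0/p<1$, the sequence $n r^n$ is bounded; explicitly, $\sup_{n\ge0} n r^n \le 1/(e\ln(1/r))$. Hence
\begin{equation*}
a_n \le \bigl(a_0 + C\sup_{m\ge 0} m r^m\bigr) p^n =: C_1 p^n ,
\end{equation*}
for all $n\ge 1$. The case $n=0$ holds trivially, since $C_1\ge a_0$.

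An alternative that avoids the factor $n$ in the case $\rho\neq q$ is to sum the geometric series exactly. This gives $\sum_{k=1}^n\rho^{n-k}q^k\le \frac{\max\{\rho,q\}^{n+1}}{|\rho-q|}$, up to a harmless constant. The uniform argument above, however, handles $\rho=q$ as well, so we use it.

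No step is a genuine obstacle. The only point needing care is that the constant in the bound blows up as $\rho\to q$, and the choice $p>\max\{\rho,q\}$ is exactly what removes that degeneracy.
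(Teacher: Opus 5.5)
Your proof is correct and follows the paper's route: you unroll the recursion to $a_n \le \rho^n a_0 + C\sum_{k=1}^{n}\rho^{n-k}q^k$ and then absorb the polynomial factor into a slightly larger geometric rate. The only difference is bookkeeping. The paper splits into $\rho\neq q$, where it sums the geometric series exactly and gets $p=\max\{\rho,q\}$, and $\rho=q$, where it uses your absorption of the factor $n$. Your bound $\rho^{n-k}q^k\le\max\{\rho,q\}^n$ treats both cases at once, at the price of always taking $p>\max\{\rho,q\}$ strictly, which the lemma allows.
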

\begin{proof}
	Iterating \eqref{lemma:rec} $n$ times, we obtain
	\begin{equation}
		a_n \leq \rho^n a_0 + C \sum_{k=1}^n \rho^{n-k} q^k = \rho^n a_0 + C \rho^n \sum_{k=1}^n \left( \frac{q}{\rho} \right)^k. \label{eq:sum_iter}
	\end{equation}
    For the case $q \neq \rho$, the summation in \eqref{eq:sum_iter} is a geometric series, yielding
	\begin{equation*}
		a_n \leq \rho^n a_0 + \frac{Cq}{\rho - q}(\rho^n - q^n) \leq \left( a_0 + \frac{Cq}{|\rho - q|} \right) p^n,
	\end{equation*}
	where $p = \max\{\rho, q\} < 1$. Thus, the desired bound holds with $C_1 = a_0 + \frac{Cq}{|\rho - q|}$.
    For the case $q = \rho$, 
    \eqref{eq:sum_iter} simplifies to $a_n \leq (a_0 + nC) \rho^n$. For any $p \in (\rho, 1)$, we can write $a_n \leq [(a_0 + nC) (\rho/p)^n] p^n$. Since $(a_0 + nC)(\rho/p)^n \to 0$ as $n \to \infty$, the term in the brackets is bounded by some constant $C_1 < \infty$.
\end{proof}

\subsection{Proof of Theorem \ref{thm:convergence}}\label{subsect:proof of thm}
{\bf Step 1: Show that $\{(V^{n,1}, V^{n,2})\}_{n \geq 1}$ is Cauchy in the complete space $\Theta^{(2)}_{[0, T]} \times C^{2}([0, T] \times \mathbb{R}^d)$.}
Define $\Delta V^{n,i} := V^{n+1,i} - V^{n,i}$ and $\Delta f^{n,i} := f^{n+1,i} - f^{n,i}$, where $V^{n,i}$ and $f^{n,i}$ are given by \eqref{eq:rep1}, \eqref{eq:rep2}, \eqref{eq:f1}, and \eqref{eq:f2}, respectively, for $i=1,2$. Partition the time horizon $[0, T]$ into $m$ segments $0 = t_0 < t_1 < \dots < t_m = T$ such that $t_i - t_{i-1} \leq \epsilon$ for some sufficiently small $\epsilon > 0$ to be determined. For each  $i \in \{1, \dots, m\}$, we introduce the notations
    \begin{align*}
    \mathcal{E}_{0, i}^{n,1} &:= [\Delta V^{n,1}]^{(0)}_{[t_{i-1}, t_i]}, & 
    \mathcal{E}_{1, i}^{n,1} &:= [\Delta V^{n,1}_{x}]^{(0)}_{[t_{i-1}, t_i]}, \\
    \mathcal{E}_{2, i}^{n,1} &:= [\Delta  V^{n,1}_{xx}]^{(0)}_{[t_{i-1}, t_i]}, & 
    \mathcal{E}_{3, i}^{n,1} &:= [\Delta V^{n,1}_{xy}]^{(0)}_{[t_{i-1}, t_i]}, \\[1ex]
    \mathcal{E}_{0, i}^{n,2} &:= \|\Delta V^{n,2}\|^{(0)}_{[t_{i-1}, t_i]\times\mathbb{R}^d}, & 
    \mathcal{E}_{1, i}^{n,2} &:= \|\Delta V_x^{n,2}\|^{(0)}_{[t_{i-1}, t_i]\times\mathbb{R}^d}, \\
    \mathcal{E}_{2, i}^{n,2} &:= \|\Delta V_{xx}^{n,2}\|^{(0)}_{[t_{i-1}, t_i]\times\mathbb{R}^d}.
\end{align*}
By Lemma \ref{lemma:H_estimates}, the uniform bound \eqref{uniformbounded} for $\{(V^{n,1}, V^{n,2})\}_{n\ge 1}$, and Assumption \ref{assump:regularity} (iii), we have
\begin{align}
	|Z^n(s,x) - Z^{n-1}(s,x)|
	&\leq C \left[ |\Delta V^{n-1,1}_{x}(s,s,x,x)| + |G_z(s,x, V^{n,2}(s,x)) - G_z(s,x, V^{n-1,2}(s,x))| \right. \nonumber \\
	&\qquad \left. + |\Delta V^{n-1,2}_{x}(s,x)| \right] \nonumber \\
	&\leq C \left[ |\Delta V^{n-1,1}_{x}(s,s,x,x)| + |\Delta V^{n-1,2}(s,x)| + |\Delta V^{n-1,2}_{x}(s,x)| \right]. \label{estimate:zndiff}
\end{align}
Consequently, combining Lemma \ref{lemma:H_estimates} with \eqref{uniformbounded} and \eqref{estimate:zndiff}, we deduce that
{\small
\begin{align}
	|\Delta f^{n,1}(\tau, s, y, x)| &= \bigg| H_z^n(s, x) \cdot \left[ V_x^{n+1,1}(\tau, s, y, x) - \delta(s-\tau) Z^n(s, x) \right] \nonumber \\
	&\qquad - H_z^{n-1}(s, x) \cdot \left[ V_x^{n,1}(\tau, s, y, x) - \delta(s-\tau) Z^{n-1}(s, x) \right] \nonumber \\
	&\qquad + \delta(s-\tau) \left[ H^n(s, x) - H^{n-1}(s, x) \right] \nonumber \\
	&\qquad + \delta(s-\tau) \left[ \int_{\mathbb{A}} (r(y, s, x, a) - r(x, s, x, a)) (\Gamma^n - \Gamma^{n-1})(s, x, a) \, da \right] \bigg| \nonumber \\
	&\leq |H_z^n(s, x) - H_z^{n-1}(s, x)|  |V_x^{n+1,1}(\tau, s, y, x) - \delta(s-\tau) Z^n(s, x)| \nonumber \\
	&\qquad + |H_z^{n-1}(s, x)|  \left| \Delta V_x^{n,1}(\tau, s, y, x) - \delta(s-\tau)(Z^n - Z^{n-1})(s, x) \right| \nonumber \\
	&\qquad + |\delta(s-\tau)|  |H^n(s, x) - H^{n-1}(s, x)| \nonumber \\
	&\qquad + |\delta(s-\tau)| \, \bigg| \int_{\mathbb{A}} (r(y, s, x, a) - r(x, s, x, a)) (\Gamma^n - \Gamma^{n-1})(s, x, a) \, da \bigg| \nonumber \\
	&\leq C \left( |\Delta V_x^{n,1}(\tau, s, y, x)| + |Z^n(s, x) - Z^{n-1}(s, x)| \right) \nonumber \\
	&\leq C \Big( |\Delta V_x^{n,1}(\tau, s, y, x)| + |\Delta V^{n-1,1}_{x}(s, s, x, x)| \nonumber \\
	&\qquad \quad + |\Delta V^{n-1,2}(s, x)| + |\Delta V^{n-1,2}_{x}(s, x)| \Big), \label{eq:df1_bound} \\[1.5ex]
	|\Delta f^{n,2}(s, x)| &= \left| H_z^n(s, x) \cdot V_x^{n+1,2}(s, x) - H_z^{n-1}(s, x) \cdot V_x^{n,2}(s, x) \right| \nonumber \\
	&\leq |H_z^n(s, x) - H_z^{n-1}(s, x)|  |V_x^{n+1,2}(s, x)| + |H_z^{n-1}(s, x)|  |\Delta V_x^{n,2}(s, x)| \nonumber \\
	&\leq C \left( |\Delta V_x^{n,2}(s, x)| + |Z^n(s, x) - Z^{n-1}(s, x)| \right) \nonumber \\
	&\leq C \Big( |\Delta V_x^{n,2}(s, x)| + |\Delta V^{n-1,1}_{x}(s, s, x, x)| \nonumber \\
	&\qquad \quad + |\Delta V^{n-1,2}(s, x)| + |\Delta V^{n-1,2}_{x}(s, x)| \Big). \label{eq:df2_bound}
\end{align}
}
where $C>0$ is a constant independent of $n$.

Then, utilizing the representation formulas \eqref{rep_first1}, \eqref{rep_first2} and \eqref{eq:rep2}, together with the  estimates \eqref{eq:df1_bound} and \eqref{eq:df2_bound}, we find that for any $(\tau, t, y, x) \in \Delta[t_{i-1}, t_i] \times \mathbb{R}^d \times \mathbb{R}^d$,
		\begin{align*}
			|\Delta V^{n,1}_x(\tau, t, y, x)| &\leq \mathbb{E} \left[ |\nabla X_{t_i}^{t, x}| |\Delta V_{x}^{n,1}(\tau, t_i, y, X_{t_i}^{t, x})| \right] + \mathbb{E} \left[ \int_t^{t_i} |\Delta f^{n,1}(\tau, s, y, X_s^{t, x})| |N_s^{t, x}| ds \right] \\
			&\leq C e^{C\epsilon} \bigg( \mathcal{E}_{1, i+1}^{n,1} + \sqrt{\epsilon} \left( \mathcal{E}_{1, i}^{n,1} + \mathcal{E}_{1, i}^{n-1,1} + \mathcal{E}_{1, i}^{n-1,2} + \mathcal{E}_{0, i}^{n-1,2} \right) \bigg), \\
			|\Delta V^{n,2}_x(t, x)| &\leq \mathbb{E} \left[ |\nabla X_{t_i}^{t, x}| |\Delta V_{x}^{n,2}(t_i, X_{t_i}^{t, x})| \right] + \mathbb{E} \left[ \int_t^{t_i} |\Delta f^{n,2}(s, X_s^{t, x})| |N_s^{t, x}| ds \right] \\
			&\leq C e^{C\epsilon} \bigg( \mathcal{E}_{1, i+1}^{n,2} + \sqrt{\epsilon} \left( \mathcal{E}_{1, i}^{n,2} + \mathcal{E}_{1, i}^{n-1,1} + \mathcal{E}_{1, i}^{n-1,2} + \mathcal{E}_{0, i}^{n-1,2} \right) \bigg), \\
			|\Delta V^{n,2}(t, x)| &\leq \mathbb{E} \left[ |\Delta V^{n,2}(t_i, X_{t_i}^{t, x})| \right] + \mathbb{E} \left[ \int_t^{t_i} |\Delta f^{n,2}(s, X_s^{t, x})| ds \right] \\
			&\leq C e^{C\epsilon} \bigg( \mathcal{E}_{0, i+1}^{n,2} + \epsilon \left( \mathcal{E}_{1, i}^{n,2} + \mathcal{E}_{1, i}^{n-1,1} + \mathcal{E}_{1, i}^{n-1,2} + \mathcal{E}_{0, i}^{n-1,2} \right) \bigg).
		\end{align*}
		Summing these estimates and noting that $\epsilon < \sqrt{\epsilon}$ for small $\epsilon$, we obtain
		\begin{align*}
			\mathcal{E}_{1, i}^{n,1} + \mathcal{E}_{1, i}^{n,2} + \mathcal{E}_{0, i}^{n,2} &\leq C e^{C\epsilon} \bigg( \mathcal{E}_{1, i+1}^{n,1} + \mathcal{E}_{1, i+1}^{n,2} + \mathcal{E}_{0, i+1}^{n,2} \\
			&\quad + \sqrt{\epsilon} \left( \mathcal{E}_{1, i}^{n,1} + \mathcal{E}_{1, i}^{n,2} + \mathcal{E}_{0, i}^{n,2} + \mathcal{E}_{1, i}^{n-1,1} + \mathcal{E}_{1, i}^{n-1,2} + \mathcal{E}_{0, i}^{n-1,2} \right) \bigg).
		\end{align*}
		By choosing $\epsilon > 0$ small enough such that $C e^{C\epsilon} \sqrt{\epsilon} \leq 1/3$ and rearranging the terms, we get the recursive relation
		\begin{equation} \label{eq:local_recursion_grad}
			\mathcal{E}_{1, i}^{n,1} + \mathcal{E}_{1, i}^{n,2} + \mathcal{E}_{0, i}^{n,2} \leq \frac{1}{2} \left( \mathcal{E}_{1, i}^{n-1,1} + \mathcal{E}_{1, i}^{n-1,2} + \mathcal{E}_{0, i}^{n-1,2} \right) + C \left( \mathcal{E}_{1, i+1}^{n,1} + \mathcal{E}_{1, i+1}^{n,2} + \mathcal{E}_{0, i+1}^{n,2} \right).
		\end{equation}
      For each $n\ge 1$, given that $\mathcal{E}_{1, m+1}^{n,1} + \mathcal{E}_{1, m+1}^{n,2} + \mathcal{E}_{0, m+1}^{n,2} = 0$, due to the fixed terminal conditions, we may apply backward induction from $i=m$ down to $i=1$. This, along with Lemma \ref{lemma:seq_decay}, implies 
\begin{equation} \label{eq:cauchy1}
	[V_x^{n+1,1} - V_x^{n,1}]^{(0)}_{[0,T]} + \|V_x^{n+1,2} - V_x^{n,2}\|^{(0)} + \|V^{n+1,2} - V^{n,2}\|^{(0)}\leq C q^n,
\end{equation}
for some  $q \in (0, 1)$. Moreover, we deduce from the Feynman--Kac representation \eqref{eq:rep1} that
\begin{equation} \label{eq:val_cauchy}
	|\Delta V^{n,1}(\tau, t, y, x)| \leq \mathbb{E} \left[ \int_t^T |\Delta f^{n,1}(\tau, s, y, X^{t,x}_s)| ds \right] \leq C q^{n-1},
\end{equation}
where the second inequality follows  from  \eqref{eq:df1_bound} and \eqref{eq:cauchy1}.

Similarly, for $\Delta V_{xy}^{n,1}$, we can first deduce the bound for $\Delta f_{y}^{n,1}$, by Lemma \ref{lemma:H_estimates} and \eqref{uniformbounded}, as follows:
\begin{align*}
	|\Delta f_{y}^{n,1}(\tau, s, y, x)|&=|(H_{z}^n(s, x)-H_{z}^{n-1}(s, x))\cdot V^{n+1,1}_{xy}(\tau,s,y,x)+H_{z}^{n-1}(s, x)\cdot \Delta V^{n,1}_{xy}(\tau,s,y,x)\\
	&\quad+\delta(s-\tau)\int_{\mathbb{A}}r_y(y,t,x,a)\left(\Gamma^n-\Gamma^{n-1}\right)(s,x,a)da|\\&\leq C\left[|Z^n(s,x)-Z^{n-1}(s,x)|+| \Delta V^{n,1}_{xy}(\tau,s,y,x)|\right]\\
	&\leq C\left[|\Delta V^{n-1,1}_{x}(s,s,x,x)|+ |\Delta V^{n-1,2}(s,x)|+|\Delta V^{n-1,2}_{x}(s,x)|+| \Delta V^{n,1}_{xy}(\tau,s,y,x)|\right].
\end{align*}
 Substituting this into the formula \eqref{rep_first1y} and utilizing the established convergence rates \eqref{eq:cauchy1}, we have
\begin{align*}
	|\Delta V_{xy}^{n,1}(\tau, t,y, x)| &\leq \mathbb{E} \left[ |\nabla X_{t_i}^{t, x}| |\Delta V_{xy}^{n,1}(\tau, t_i, y, X_{t_i}^{t, x})| \right] + \mathbb{E} \left[ \int_t^{t_i} |\Delta f^{n,1}_y(\tau, s, y, X_s^{t, x})| |N_s^{t, x}| ds \right] \\
	&\leq C e^{C\epsilon} \bigg( \mathcal{E}_{3, i+1}^{n,1} + \sqrt{\epsilon} \left( \mathcal{E}_{3, i}^{n,1} + \mathcal{E}_{1, i}^{n-1,1} + \mathcal{E}_{1, i}^{n-1,2} + \mathcal{E}_{0, i}^{n-1,2} \right) \bigg)\\
	&\leq C e^{C\epsilon} \bigg( \mathcal{E}_{3, i+1}^{n,1} + \sqrt{\epsilon} \left( \mathcal{E}_{3, i}^{n,1} + q^{n-1} \right) \bigg).
\end{align*}
Setting $\epsilon > 0$ small enough such that $C e^{C\epsilon} \sqrt{\epsilon} \leq 1/2$ and rearranging the terms, we obtain the recurrence
$	\mathcal{E}_{3, i}^{n,1} \leq C \big( \mathcal{E}_{3, i+1}^{n,1} + q^{n-1} \big)$.
For any $n\ge1$, given that $\mathcal{E}_{3, m+1}^{n,1}=0$, a backward induction from $i=m$ to $i=1$ leads to
\begin{equation}\label{eq:cauchy2}
	[V_{xy}^{n+1,1} - V_{xy}^{n,1}]^{(0)}_{[0,T]} \leq C q^{n-1}.
\end{equation}		
	Next, we proceed to estimate $\Delta V_{xx}^{n,1}$ and $\Delta V_{xx}^{n,2}$. Following the same reasoning in \eqref{estimate:zndiff}, we have
{\small\begin{align}\label{estimate:zxndiff}
	|Z_x^n(s,x)-Z_x^{n-1}(s,x)|&\leq|\Delta V^{n-1,1}_{xy}(s,s,x,x)|+|\Delta V^{n-1,1}_{xx}(s,s,x,x)|\nonumber\\&+|G_{zx}(s,x,V^{n,2}(s,x))-G_{zx}(s,x,V^{n-1,2}(s,x))||V^{n,2}_{x}(s,x)|\nonumber\\&+|G_{zz}(s,x,V^{n,2}(s,x))\cdot V^{n,2}_{x}(s,x)-G_{zz}(s,x,V^{n-1,2}(s,x))\cdot V^{n-1,2}_{x}(s,x)||V^{n,2}_{x}(s,x)|\nonumber\\&+|G_{zx}(s,x,V^{n-1,2}(s,x))+G_{zz}(s,x,V^{n-1,2}(s,x))\cdot V^{n-1,2}_{x}(s,x)||\Delta V^{n-1,2}_{x}(s,x)|\nonumber\\&+|G_{z}(s,x,V^{n,2}(s,x))\otimes V^{n,2}_{xx}(s,x)-G_{z}(s,x,V^{n-1,2}(s,x))\otimes V^{n-1,2}_{xx}(s,x)|\nonumber\\&\leq C\bigg(|\Delta V^{n-1,1}_{xy}(s,s,x,x)|+|\Delta V^{n-1,1}_{xx}(s,s,x,x)|+ |\Delta V^{n-1,2}(s,x)|\nonumber\\&+|\Delta V^{n-1,2}_{x}(s,x)|+|\Delta V^{n-1,2}_{xx}(s,x)|\bigg).
\end{align}}
	Combining \eqref{uniformbounded}, \eqref{estimate:zndiff}, \eqref{estimate:zxndiff}, Lemma \ref{lemma:H_estimates}, and Assumption \ref{assump:regularity} (iii), we obtain
        {\small
\begin{align}
	|\Delta f_x^{n,1}(\tau, s, y, x)| 
	&\leq \Big| (H_{zx}^n(s, x) + H_{zz}^n(s, x)\cdot Z_x^n(s, x)) - (H_{zx}^{n-1}(s, x) + H_{zz}^{n-1}(s, x)\cdot Z_x^{n-1}(s, x)) \Big| \nonumber \\
	&\qquad \times |V^{n+1,1}_x(\tau, s, y,x) - \delta(s-\tau) Z^n(s, x) | \nonumber \\
	&\quad + | H_{zx}^{n-1}(s, x) + H_{zz}^{n-1}(s, x)\cdot Z_{x}^{n-1}(s, x) | \nonumber \\
	&\qquad \times \Big| \Delta V_{x}^{n,1}(\tau, s, y, x) - \delta(s-\tau) (Z^n-Z^{n-1})(s, x) \Big| \nonumber \\
	&\quad + | H_z^n(s, x) | \Big| \Delta V_{xx}^{n,1}(\tau, s, y, x) - \delta(s-\tau)(Z_{x}^{n}(s, x) -Z_{x}^{n-1}(s, x)) \Big| \nonumber \\
	&\quad + | H_z^n(s, x) - H_z^{n-1}(s, x) |  | V_{xx}^{n,1}(\tau, s, y, x) - \delta(s-\tau) Z_x^{n-1}(s, x) | \nonumber \\
	&\quad + \delta(s-\tau) \Big| (H_x^n(s, x) + H_z^n(s, x)\cdot Z_x^n(s, x)) - (H_x^{n-1}(s, x) + H_z^{n-1}(s, x)\cdot Z_x^{n-1}(s, x)) \Big| \nonumber \\
	&\quad + \delta(s-\tau) \Big| \int_{\mathbb{A}}(r_x(y,s,x,a)-r_y(x,s,x,a)-r_x(x,s,x,a))(\Gamma^n-\Gamma^{n-1})(s,x,a)da \Big| \nonumber \\
	&\quad + \delta(s-\tau) \Big| \int_{\mathbb{A}}(r(y,s,x,a)-r(x,s,x,a))(\partial_x\Gamma^n-\partial_x\Gamma^{n-1})(s,x,a)da \Big| \nonumber \\
	&\leq C \bigg( |Z^n(s,x)-Z^{n-1}(s,x)| + |Z_x^n(s,x)-Z_x^{n-1}(s,x)| \nonumber \\
	&\qquad \quad + |\Delta V_{x}^{n,1}(\tau, s, y, x)| + |\Delta V_{xx}^{n,1}(\tau, s, y, x)| \bigg) \nonumber \\
	&\leq C \bigg( |\Delta V_{x}^{n,1}(\tau, s, y, x)| + |\Delta V_{xx}^{n,1}(\tau, s, y, x)| + |\Delta V^{n-1,1}_{xy}(s,s,x,x)| \nonumber \\
	&\qquad \quad + |\Delta V^{n-1,1}_{xx}(s,s,x,x)| + |\Delta V^{n-1,1}_{x}(s,s,x,x)| + |\Delta V^{n-1,2}(s,x)| \nonumber \\
	&\qquad \quad + |\Delta V^{n-1,2}_{x}(s,x)| + |\Delta V^{n-1,2}_{xx}(s,x)| \bigg), \label{eq:df1x_bound} \\[1.5ex]
	|\Delta f_x^{n,2}(s, x)| 
	&\leq \Big| (H_{zx}^n(s, x) + H_{zz}^n(s, x)\cdot Z_{x}^n(s, x)) - (H_{zx}^{n-1}(s, x) + H_{zz}^{n-1}(s, x)\cdot Z_{x}^{n-1}(s, x)) \Big| \nonumber \\
	&\qquad \times | V^{n+1,2}_x(s, x) | \nonumber \\
	&\quad + | H_{zx}^{n-1}(s, x) + H_{zz}^{n-1}(s, x)\cdot Z_{x}^{n-1}(s, x) |  \Big| \Delta V_x^{n,2}(s, x) \Big| \nonumber \\
	&\quad + | H_z^n(s, x) |  \Big| \Delta V_{xx}^{n,2}(s, x) \Big| + | H_z^n(s, x) - H_z^{n-1}(s, x) |  | V_{xx}^{n,2}(s, x) | \nonumber \\
	&\leq C \bigg( |Z^n(s,x)-Z^{n-1}(s,x)| + |Z_x^n(s,x)-Z_x^{n-1}(s,x)| \nonumber \\
	&\qquad \quad + |\Delta V_{x}^{n,2}(s,x)| + |\Delta V_{xx}^{n,2}(s,x)| \bigg) \nonumber \\
	&\leq C \bigg( |\Delta V_{x}^{n,2}(s,x)| + |\Delta V_{xx}^{n,2}(s,x)| + |\Delta V^{n-1,1}_{xy}(s,s,x,x)| \nonumber \\
	&\qquad \quad + |\Delta V^{n-1,1}_{xx}(s,s,x,x)| + |\Delta V^{n-1,1}_{x}(s,s,x,x)| + |\Delta V^{n-1,2}(s,x)| \nonumber \\
	&\qquad \quad + |\Delta V^{n-1,2}_{x}(s,x)| + |\Delta V^{n-1,2}_{xx}(s,x)| \bigg). \label{eq:df2x_bound}
\end{align}
}Now, for any $(\tau, t, y, x) \in \Delta[t_{i-1}, t_i] \times \mathbb{R}^d \times \mathbb{R}^d$, the representations in \eqref{rep_second1} and \eqref{rep_second2} yield
		\begin{align*}
			|\Delta V_{xx}^{n,1}(\tau, t, y, x)| &\leq \mathbb{E} \left[ | (\nabla X_{t_i}^{t, x})^\top (\Delta V_{xx}^{n,1}(\tau, t_i, y, X^{t, x}_{t_i})) \nabla X_{t_i}^{t, x} + \Delta V_x^{n,1}(\tau, t_i, y, X^{t, x}_{t_i}) \otimes \nabla^2 X_{t_i}^{t, x} | \right] \\
			&\quad + \mathbb{E} \left[ \int_t^{t_i} \left( | N_s^{t, x} (\Delta f_x^{n,1}\left(\tau,s, y, X_s^{t, x}\right))^\top \nabla X_s^{t, x} | + | \Delta f^{n,1}\left(\tau,s,y, X_s^{t, x}\right) \nabla N_s^{t, x} | \right) ds \right],\\
			|\Delta V_{xx}^{n,2}(t, x)| &\leq \mathbb{E} \left[ | (\nabla X_{t_i}^{t, x})^\top (\Delta V_{xx}^{n,2}(t_i, X^{t, x}_{t_i})) \nabla X_{t_i}^{t, x} + \Delta V_x^{n,2}(t_i, X^{t, x}_{t_i}) \otimes \nabla^2 X_{t_i}^{t, x} | \right] \\
			&\quad + \mathbb{E} \left[ \int_t^{t_i} \left( | N_s^{t, x} (\Delta f_x^{n,2}\left(s, X_s^{t, x}\right))^\top \nabla X_s^{t, x} | + | \Delta f^{n,2}\left(s, X_s^{t, x}\right) \nabla N_s^{t, x} | \right) ds \right].
		\end{align*}
        Plugging \eqref{estimate_1}, \eqref{eq:df1_bound}, \eqref{eq:df2_bound}, \eqref{eq:df1x_bound} and \eqref{eq:df2x_bound} into the above estimates,  we obtain
		\begin{align*}
			\mathcal{E}_{2, i}^{n,1} \leq C e^{C\epsilon} (\mathcal{E}_{2, i+1}^{n,1} + \mathcal{E}_{1, i+1}^{n,1}) &+ C e^{C\epsilon} \sqrt{\epsilon} \bigg( \mathcal{E}_{1, i}^{n,1} + \mathcal{E}_{2, i}^{n,1} + \mathcal{E}_{1, i}^{n-1,1} + \mathcal{E}_{0, i}^{n-1,2}\\&+\mathcal{E}_{1, i}^{n-1,2} + \mathcal{E}_{2, i}^{n-1,1} + \mathcal{E}_{2, i}^{n-1,2} + \mathcal{E}_{3, i}^{n-1,1} \bigg),\\
				\mathcal{E}_{2, i}^{n,2} \leq C e^{C\epsilon} (\mathcal{E}_{2, i+1}^{n,2} + \mathcal{E}_{1, i+1}^{n,2}) &+ C e^{C\epsilon} \sqrt{\epsilon} \bigg( \mathcal{E}_{1, i}^{n,2} + \mathcal{E}_{2, i}^{n,2} + \mathcal{E}_{1, i}^{n-1,1} + \mathcal{E}_{0, i}^{n-1,2}\\&+\mathcal{E}_{1, i}^{n-1,2} + \mathcal{E}_{2, i}^{n-1,1} + \mathcal{E}_{2, i}^{n-1,2} + \mathcal{E}_{3, i}^{n-1,1} \bigg).
		\end{align*}
By summing these two inequalities and using \eqref{eq:cauchy1} and \eqref{eq:cauchy2}, we get
			\begin{align*}
		\mathcal{E}_{2, i}^{n,1}+\mathcal{E}_{2, i}^{n,2} &\leq C e^{C\epsilon} (\mathcal{E}_{2, i+1}^{n,1} + \mathcal{E}_{2, i+1}^{n,2} + \mathcal{E}_{1, i+1}^{n,1}+ \mathcal{E}_{1, i+1}^{n,2}) + C e^{C\epsilon} \sqrt{\epsilon} \bigg(\mathcal{E}_{2, i}^{n,1}+ \mathcal{E}_{2, i}^{n,2} +\mathcal{E}_{2, i}^{n-1,1} \\&\quad+ \mathcal{E}_{2, i}^{n-1,2} + \mathcal{E}_{3, i}^{n-1,1}+\mathcal{E}_{1, i}^{n,1}+ \mathcal{E}_{1, i}^{n,2} +\mathcal{E}_{1, i}^{n-1,1} +\mathcal{E}_{1, i}^{n-1,2}+\mathcal{E}_{0, i}^{n-1,2}\bigg)\\
		&\leq  C e^{C\epsilon} (\mathcal{E}_{2, i+1}^{n,1} + \mathcal{E}_{2, i+1}^{n,2} + q^n) + C e^{C\epsilon} \sqrt{\epsilon} \bigg(\mathcal{E}_{2, i}^{n,1}+ \mathcal{E}_{2, i}^{n,2} +\mathcal{E}_{2, i}^{n-1,1} + \mathcal{E}_{2, i}^{n-1,2} + q^n\bigg).
		\end{align*}
	By Choosing $\epsilon > 0$ small enough such that $C e^{C\epsilon} \sqrt{\epsilon} \leq 1/3$ and rearranging the terms, we get 
		\begin{equation*}
			\mathcal{E}_{2, i}^{n,1}+\mathcal{E}_{2, i}^{n,2} \leq \frac{1}{2}(\mathcal{E}_{2, i}^{n-1,1} + \mathcal{E}_{2, i}^{n-1,2} ) + C \left( \mathcal{E}_{2, i+1}^{n,1} + \mathcal{E}_{2, i+1}^{n,2} + q^n \right).
		\end{equation*}
	For each $n\ge1$, as $\mathcal{E}_{2, m+1}^{n,1}+\mathcal{E}_{2, m+1}^{n,2} = 0$, applying backward induction and Lemma \ref{lemma:seq_decay} leads to
		\begin{equation}
			[V_{xx}^{n+1,1} - V_{xx}^{n,1}]^{(0)}_{[0,T]} + 	\|V_{xx}^{n+1,2} - V_{xx}^{n,2}\|^{(0)} \leq C p^n, \quad \text{for some } p \in (0, 1). \label{eq:hess_cauchy}
		\end{equation}

Finally, we deal with the time derivatives $V_t^{n,1}$ and $V_t^{n,2}$. From the recursive linear PDE \eqref{eq:recursive},
\begin{align*}
	|\Delta V_t^{n,1}(\tau, t, y, x)| 
	&\leq \frac{1}{2} \left| \text{tr}\left(\sigma \sigma^\top(t, x)  \Delta V_{xx}^{n,1}(\tau, t,y, x)\right) \right| + |\Delta f^{n,1}(\tau, t, y, x)|,\\	
	|\Delta V_t^{n,2}(t, x)| 
	&\leq \frac{1}{2} \left| \text{tr}\left(\sigma \sigma^\top(t, x)  \Delta V_{xx}^{n,2}(t,x)\right) \right| + |\Delta f^{n,2}(t,x)|.
\end{align*}
This, along with the exponential decay in \eqref{eq:cauchy1} and \eqref{eq:hess_cauchy} and the  estimates \eqref{eq:df1_bound} and \eqref{eq:df2_bound}, implies
\begin{equation}
	[ V_t^{n+1,1} -  V_t^{n,1}]^{(0)}_{[0,T]} +	\| V_t^{n+1,2} -  V_t^{n,1}\|^{(0)} \leq C p^n, \quad \text{for some } p \in (0, 1). \label{eq:time_cauchy}
\end{equation}
	
    Collectively, the estimates \eqref{eq:cauchy1}, \eqref{eq:val_cauchy}, \eqref{eq:hess_cauchy}, and \eqref{eq:time_cauchy} yield
	\begin{equation}\label{exp decay}
		[V^{n+1,1} - V^{n,1}]^{(2)}_{[0,T]} + \|V^{n+1,2} - V^{n,2}\|^{(2)} \leq C p^n.
	\end{equation}
	Hence, $\{(V^{n,1}, V^{n,2})\}_{n \geq 1}$ is a Cauchy sequence in the Banach space $\Theta_{[0, T]}^{(2)} \times C^{2}([0, T] \times \mathbb{R}^d)$.

    {\bf Step 2: Show the exponential convergence \eqref{exp convergence V and pi}.} 
    As $\{(V^{n,1}, V^{n,2})\}_{n \geq 1}$ is Cauchy in $\Theta_{[0, T]}^{(2)} \times C^{2}([0, T] \times \mathbb{R}^d)$, it admits a limit $(V^{*,1}, V^{*,2})$ in the same space, i.e., 
	\begin{equation}\label{convergence in Banach}
		\lim_{n \to \infty} \left( [V^{n,1} - V^{*,1}]^{(2)}_{[0,T]} + \|V^{n,2} - V^{*,2}\|^{(2)} \right) = 0.
	\end{equation}
    Furthermore, summing up the geometric increments in \eqref{exp decay} yields the exponential  convergence
	\begin{equation}\label{exp_conv1}
    [V^{n,1} - V^{*,1}]^{(2)}_{[0,T]} + \|V^{n,2} - V^{*,2}\|^{(2)} \leq C p^n.
	\end{equation}
    On the other hand, consider $\pi^*$ defined by \eqref{pi^*}. Given the uniform bound \eqref{uniformbounded} for $\{(V^{n,1}, V^{n,2})\}_{n \geq 1}$ and Assumption \ref{assump:regularity} (iii), it can be checked directly that
\begin{align*}
	|\pi^n(t, x)(a) &- \pi^*(t, x)(a)| \leq C \left| Z^{n-1}(t, x) - Z^*(t, x) \right| \\
	&\leq C \Big[ |V_{x}^{n-1,1}(t, t, x, x) - V_{x}^{*,1}(t, t, x, x)| \\
	&\qquad + |G_{z}(t, x, V^{n-1,2}(t, x)) - G_{z}(t, x, V^{*,2}(t, x))| 
    + |V_{x}^{n-1,2}(t, x) - V_{x}^{*,2}(t, x)| \Big].
\end{align*}
Applying \eqref{exp_conv1} to the right-hand side above yields
 $   \|\pi^n - \pi^*\|^{(0)} \leq C p^{n-1}$.

{\bf Step 3: Show that $\pi^*$ is an equilibrium policy.} 
Let us first prove $\pi^*\in\mathcal A$. As $(V^{*,1}, V^{*,2}) \in \Theta_{[0, T]}^{(2)} \times C^{2}([0, T] \times \mathbb{R}^d)$, $Z^*(t, x)$ is jointly continuous in $(t, x)$, so that $\pi^*(t, x, a) = \Gamma(t, x, Z^*(t, x), a)$ is also jointly continuous in $(t, x)$ for each $a \in \mathbb{A}$. For any $x, x' \in \mathbb{R}^d$ and fixed $t \in [0, T]$, it follows from Assumption \ref{assump:regularity}, Lemma \ref{lemma:H_estimates}, and the boundedness of the limit $(V^{*,1}, V^{*,2})$ that
\begin{align}
	|\tilde{b}(t, x, \pi^*(t,x)) - \tilde{b}(t, x', \pi^*(t,x'))| 
	&\leq \int_{\mathbb{A}} |b(t, x, a) - b(t, x', a)| \Gamma(t, x, Z^*(t,x), a) da \notag\\
	&\quad + \int_{\mathbb{A}} |b(t, x', a)| \left| \Gamma(t, x, Z^*(t,x), a) - \Gamma(t, x', Z^*(t,x), a) \right| da \notag\\
    &\quad + \int_{\mathbb{A}} |b(t, x', a)| \left| \Gamma(t, x', Z^*(t,x), a) - \Gamma(t, x', Z^*(t,x'), a) \right| da \notag\\
	&\leq C \left( |x-x'| + |Z^*(t,x) - Z^*(t,x')| \right).\label{b-b}
\end{align}
Note that on the right-hand side above,
\begin{align}
    |Z^*(t,x) &- Z^*(t,x')| \leq | V_x^{*,1}(t, t, x, x) -  V_x^{*,1}(t, t, x', x')| \notag\\
    &\quad + \left| G_z(t, x, V^{*,2}(t, x))\cdot V_x^{*,2}(t, x) - G_z(t, x', V^{*,2}(t, x'))\cdot  V_x^{*,2}(t, x') \right|.\label{Z-Z}
\end{align}
Also, as the sequence $\{(V^{n,1}, V^{n,2})\}_{n \geq 1}$ is uniformly bounded in the sense of \eqref{uniformbounded}, for any $n\ge 1$,
\begin{align*}
    &|V_x^{n,1}(t,t,x,x) -  V_x^{n,1}(t,t,x',x')| \\
    &\leq | V_x^{n,1}(t,t,x,x) - V_x^{n,1}(t,t,x',x)| + | V_x^{n,1}(t,t,x',x) -  V_x^{n,1}(t,t,x',x')| \\
    &\leq \left( [ V_{xy}^{n,1}]^{(0)}_{[0,T]} + [ V_{xx}^{n,1}]^{(0)}_{[0,T]} \right) |x-x'| 
    \leq C |x-x'|,
\end{align*}
where $C$ is independent of $n$. As $n \to \infty$, we get
 $    |V_x^{*,1}(t,t,x,x) -  V_x^{*,1}(t,t,x',x')| \leq C |x-x'|$.
Combining this with \eqref{Z-Z}, Assumption \ref{assump:regularity} (iii), and $\|V^{*,2}\|^{(2)} \leq C$, we deduce from \eqref{b-b} that the drift $\tilde{b}(t, x, \pi^*(t,x))$ is Lipschitz continuous in $x$ uniformly in $t$. This ensures the existence of a unique strong solution to \eqref{sde_1} under $\pi^*$. Finally, by the same arguments in \eqref{admiss_0} and \eqref{admiss_1} (with $\pi^1$ replaced by $\pi^*$), we see that $J^{\pi^*}(t,x)$ is finite and \eqref{eq:local_unif_bound} is fulfilled. Hence, we conclude $\pi^*\in\mathcal A$ (Definition \ref{def:adm}).

Now, with the established regularity $(V^{*,1}, V^{*,2})\in\Theta_{[0, T]}^{(2)} \times C^{2}([0, T] \times \mathbb{R}^d)$, the formal derivation in Section \ref{subsect:eehjb} (see \eqref{I}-\eqref{Z} particularly) can be rigorously justified as a standard verification argument, which shows that $\pi^*$ is indeed the equilibrium policy.

{\bf Step 4: Show that $(V^{*,1}, V^{*,2})$ is the unique solution in $\Theta_{[0, T]}^{(2)} \times C^{2}([0, T] \times \mathbb{R}^d)$ to the EEHJB equation \eqref{eq:EEHJB}.} Thanks to the convergence in \eqref{convergence in Banach}, passing to the limit in \eqref{eq:recursive}, as $n\to\infty$, directly shows that $(V^{*,1}, V^{*,2})\in \Theta_{[0, T]}^{(2)} \times C^{2}([0, T] \times \mathbb{R}^d)$ is a solution to the EEHJB equation \eqref{eq:EEHJB}. For the uniqueness part, let $(V^{1,1}, V^{1,2})$ and $(V^{2,1}, V^{2,2})$ be two solutions to \eqref{eq:EEHJB} in $\Theta_{[0, T]}^{(2)} \times C^{2}([0, T] \times \mathbb{R}^d)$. For $i=1,2$, let $f^{i,1}$ and $f^{i,2}$ denote the functions in the representations \eqref{eq:rep1} and \eqref{eq:rep2} associated with the $i$-th solution. We define $\Delta V^1 := V^{1,1} - V^{2,1}$, $\Delta V^2 := V^{1,2} - V^{2,2}$, and $\Delta f^1 := f^{1,1} - f^{2,1}$. Adopting the temporal partition strategy from Step 1 above, we define
	\begin{align*}
		\mathcal{E}_{1, i}^{1} &:= [\Delta V_x^1]^{(0)}_{[t_{i-1}, t_i]}, \quad
		\mathcal{E}_{1, i}^{2} := \|\Delta V_x^2\|^{(0)}_{[t_{i-1}, t_i]\times\mathbb{R}^d}, \quad
		\mathcal{E}_{0, i}^{2} := \|\Delta V^2\|^{(0)}_{[t_{i-1}, t_i]\times\mathbb{R}^d}.
	\end{align*}
	By proceeding analogously to the derivation of \eqref{eq:local_recursion_grad} and choosing $\epsilon$ sufficiently small, we get the recursive inequality
		$\mathcal{E}_{1, i}^{1} + \mathcal{E}_{1, i}^{2} + \mathcal{E}_{0, i}^{2} \leq \frac{1}{2} ( \mathcal{E}_{1, i}^{1} + \mathcal{E}_{1, i}^{2} + \mathcal{E}_{0, i}^{2} ) + C ( \mathcal{E}_{1, i+1}^{1} + \mathcal{E}_{1, i+1}^{2} + \mathcal{E}_{0, i+1}^{2})$.
	Rearranging the terms yields $\mathcal{E}_{1, i}^{1} + \mathcal{E}_{1, i}^{2} + \mathcal{E}_{0, i}^{2} \leq 2C (\mathcal{E}_{1, i+1}^{1} + \mathcal{E}_{1, i+1}^{2} + \mathcal{E}_{0, i+1}^{2})$. Noting that the terminal conditions are identical, we have $\mathcal{E}_{1, m+1}^{1} + \mathcal{E}_{1, m+1}^{2} + \mathcal{E}_{0, m+1}^{2} = 0$. We can then perform backward induction from $i=m$ down to $i=1$, which yields
	\begin{equation*}
		[V_x^{1,1} - V_x^{2,1}]^{(0)}_{[0,T]} + \|V_x^{1,2} - V_x^{2,2}\|^{(0)} + \|V^{1,2} - V^{2,2}\|^{(0)} = 0.
	\end{equation*}
	This implies that $V_x^{1,1} = V_x^{2,1}$, $V^{1,2} = V^{2,2}$, and $V_x^{1,2} = V_x^{2,2}$, which guarantee that $Z^1 = Z^2$. As $f^{i,1}$ depends solely on $V_x^{i,1}$ and $Z^i$ for $i=1,2$, it follows immediately that $\Delta f^1 = 0$. Finally, by the Feynman--Kac's representation \eqref{eq:rep1}, it holds that
	\begin{equation*}
		|V^{1,1}(\tau, t, y, x) - V^{2,1}(\tau, t, y, x)| \leq \mathbb{E} \left[ \int_t^T |\Delta f^1(\tau, s, y, X^{t,x}_s)| ds \right] = 0.
	\end{equation*}
	Thus, $V^{1,1} = V^{2,1}$. We thus obtain $(V^{1,1}, V^{1,2})=(V^{2,1}, V^{2,2})$, showing the desired uniqueness.



\subsection{Discussions}
Several aspects of Theorem~\ref{thm:convergence} are worth detailed discussions. First, the probabilistic representation formulas in its proof, while inspired by \cite{ma2025convergence}, are used in a different way due to time inconcsistency. 

\begin{remark}[Comparison with \cite{ma2025convergence}]\label{remark:comparison_Ma}
In the time-consistent case of \cite{ma2025convergence}, the PIA has a well-defined target, i.e., the optimal value function $V^*$, whose regularity and PDE characterization are readily available. Hence, \cite{ma2025convergence} directly estimates the error between the $n^{th}$ iterate $V^n$ and $V^*$ in a known function space, using the representation formulas. Under time inconsistency, the target $V^{\hat\pi}$ of our PIA is unknown---it is supposed to be of the form \eqref{V^pi}, with $(V^{\hat\pi,1},V^{\hat\pi,2})$ therein satisfying the EEHJB equation \eqref{eq:EEHJB}, but whether \eqref{eq:EEHJB} actually admits a solution is unknown from the literature. In response to this, we estimate the increments $[V^{n+1,1} - V^{n,1}]^{(2)}_{[0,T]}$ and $\|V^{n+1,2} - V^{n,2}\|^{(2)}$ along our PIA, in a suitable function space identified through Proposition~\ref{prop:1}, using the representation formulas. 
\end{remark}


Second, as a byproduct, the convergence of our PIA gives a constructive proof of the existence and uniqueness of classical solutions to the EEHJB equation \eqref{eq:EEHJB}. Well-posedness of this type of PDE systems is in general elusive, due to the involved non-localness. Our PIA, as it turns out, circumvents precisely the non-localness. 

\begin{remark}\label{rem:PIA non-local}
An {\it equilibrium HJB} equation (see e.g., \cite{Yong2012, lei_nonlocal_2023, lei_nonlocality_2024, liang2025}) is known to be non-local, which complicates the study of its solutions. Well-posedness is established when the equation only allows dependence on initial time. Our equilibrium HJB equation (i.e., \eqref{eq:EEHJB}) allows dependence on initial time and state as well as additional nonlinearity. This is supposed to pose significant analytical complexity, but it is circumvented by our PIA: while \eqref{eq:EEHJB} involves the non-local term $Z(t,x)$, 
in the Policy Evaluation step of our PIA (Section~\ref{subsect:pia}), $Z(t,x)$ is fixed as $Z^n(t,x)$, obtained from the previous iteration step. Remarkably, this reduces the non-local system to a family of \textit{decoupled linear parabolic PDEs}. 
That is, the PIA successfully eliminates the non-localness in every iteration. This points to a new PIA-based method for solving a general equilibrium HJB equation, which is worth a further comprehensive investigation. 
\end{remark}

Third, Theorem~\ref{thm:convergence} suggests a new way of solving time-inconsistent stochastic control problems that {\it takes away any case-by-case guessing}.

\begin{remark}\label{rem:fixed-point approach}
A mainstream approach to time-inconsistent stochastic control problems is to derive an extended HJB equation (see Remark~\ref{rem:bjork}) and solve it case by case through a cleverly guessed ansatz; see \cite{Bjork2017,BjorkKhapkoMurgoci2021,BMZ14, EP08,EMP12,dong2014time,HS26}, among many others. By contrast, under Theorem~\ref{thm:convergence}, we just arbitrarily pick the inital functions, and the PIA takes over the rest to produce an equilibrium policy---with no need of any case-by-case guessing. This is reminiscent of the fixed-point approach for time-inconsistent stopping problems in \cite{HN18,HNZ20,HY21,HZ20,HZ22}, where recursive game-theoretic reasoning (framed as fixed-point iterations) yields intra-personal equilibrium stopping rules. Economically, our PIA performs the same recursive intra-personal game-theoretic reasoning, but for entropy-regularized stochastic control problems. This hints at a unified fixed-point iterative approach for general time-inconsistent problems (of both stopping and control) and we will leave it for future research. 
\end{remark}

Finally, let us point out that our focus on the EEHJB equation \eqref{eq:EEHJB}, instead of the extended HJB equation (which consists of both \eqref{eq:Bjork_structure} and \eqref{eq:EEHJB}; recall Remark~\ref{rem:bjork}), provides several techincal convenience. 

\begin{remark}\label{remark:adv}
The extended HJB equation couples \eqref{eq:Bjork_structure} for $(J^{\hat{\pi}},V^{\hat{\pi},1},V^{\hat{\pi},2})$ with \eqref{eq:EEHJB} for $(V^{\hat{\pi},1},V^{\hat{\pi},2})$, where \eqref{eq:Bjork_structure} serves to characterize an equilibrium policy $\hat\pi$. But as $\hat\pi$ can be alternatively determined by $(V^{\hat{\pi},1},V^{\hat{\pi},2})$ through \eqref{I}, we choose to focus solely on \eqref{eq:EEHJB}. This brings a few technical advantages. 
	\begin{enumerate}
		\item \textbf{Less regularity required:} As the extened HJB equation involves \eqref{eq:Bjork_structure}, the classical verification theorem (e.g., \cite[Theorem 15.2]{BjorkKhapkoMurgoci2021}) requires $J^{\hat{\pi}}\in C^{1,2}$, which implicitly demands $V^{\hat{\pi},1}\in C^{1,2}$ w.r.t.\ the reference variables $(\tau,y)$; recall Remark~\ref{rem:bjork}. 
        Without considering $J^{\hat{\pi}}$, our verification does not require such regularity of $V^{\hat{\pi},1}$. 
		\item \textbf{Simplified convergence analysis:} If the PIA is built from the extended HJB equation, it will iterate $(J^n, V^{n,1}, V^{n,2})$ jointly, with $J^n(t, x) = V^{n,1}(t, t, x, x) + G(t, x, V^{n,2}(t, x))$.  
        In particular, the Policy Update step (see Section~\ref{subsect:pia}) will depend on $J_x^n(t, x)$, such that the convergence of policies $\{\pi^n\}_{n\ge 1}$ will hinge on the convergence of the derivatives $\{V_y^{n,1}\}_{n\ge 1}$. Our PIA, which invovles only $(V^{n,1}, V^{n,2})$, does not require such derivative convergence. 
	\end{enumerate}
\end{remark}

\ \\
\noindent
\textbf{Acknowledgements}:
X. Yu is supported by the Hong Kong RGC General Research Fund (GRF) under grant no. 15211524 and the Hong Kong Polytechnic University research grant under no. P0045654.

{
\small
\bibliography{ref}
}

\end{document}